\newcommand{\arrow}[2][20]
 {
  \hspace{-5pt}
  \begin{tikzpicture}[scale=0.5]
   \node (A) at (0,0) {};
   \node (B) at (#1pt,0) {};
   \draw [#2,line width=0.5pt] (A) -- (B);
  \end{tikzpicture}
  \hspace{-5pt}
 }
\newcommand{\birational}[1][30]{\arrow[#1]{<->,dashed}}
\newcommand{\rational}[1][30]{\arrow[#1]{->,dashed}}
\tikzset{every picture/.style={scale=2,line width=0.7pt}}
\tikzstyle{longdashed}=[dashed,dash pattern=on 6pt off 6pt]
\def\formulaSpace{3pt}
\def\srational{\,\rational}
\def\soplus{\,\oplus\,}
\def\scong{\,\cong\,}
\def\seq{\,=\,}
\newcommand{\longto}{\longrightarrow}
\newcommand{\vecto}{\longrightarrow}
\newcommand\C{\mathbb C}
\newcommand\ul[1]{{#1}}
\newcommand\familyX{\ul{X}}
\newcommand\familyQ{\ul{Q}}
\newcommand\baseobj\cC
\newcommand\calcobj\cF
\newcommand\varobj\cB
\newcommand\sphobj\cE
\newcommand\res{\operatorname{res}}
\newcommand\id{\operatorname{id}}
\DeclareMathOperator \End {End}
\DeclareMathOperator \Hom {Hom}
\newcommand\sHom{\mathcal{H}om }
\newcommand\sEnd{\mathcal{E}nd }
\newcommand\LDerived{\mathrm{L}}\newcommand\RDerived{\mathrm{R}}
\DeclareMathOperator \RHom {\RDerived{}Hom}
\DeclareMathOperator \RsHom {\RDerived{}\mathcal{H}om}
\newcommand\Lotimes{\overset\LDerived\otimes}
\renewcommand\P{\mathbb P}
\newcommand{\Tw}{\operatorname{\fun[Tw]}}
\newcommand\cM{\mathcal M}
\newcommand{\cA}{\mathcal{A}}
\newcommand{\cB}{\mathcal{B}}
\newcommand{\cC}{\mathcal{C}}
\newcommand{\cE}{\mathcal{E}}
\newcommand{\cF}{\mathcal{F}}
\newcommand{\cN}{\mathcal{N}}
\newcommand{\cO}{\mathcal{O}}
\newcommand{\cT}{\mathcal{T}}
\newcommand{\cU}{\mathcal{U}}
\newcommand{\cR}{\mathcal{R}}
\newcommand{\Tot}{\operatorname{Tot}}
\newcommand{\Sym}{\operatorname{Sym}}
\newcommand{\comp}{\circ}
\newlength\tempWidth
\newcommand\Cone{\mathrm{Cone}}
\newcommand\D{\mathsf{D}}
\newcommand\Exc{\mathrm{Exc}}
\newcommand\fun[1][F]{\operatorname{\mathsf{#1}}}
\newcommand\lab[1]{#1 : \quad}
\newcommand\labpos[1]{#1 \quad\quad}
\newcounter{keyenvcount}
\numberwithin{equation}{section}
\theoremstyle{plain}
\newtheorem{thm}{Theorem}[section]
\newtheorem{keythm}[keyenvcount]{Theorem}
\newtheorem{prop}[thm]{Proposition}
\theoremstyle{remark}
\newtheorem{rem}[thm]{Remark}
\newtheorem*{keyrem}{Remark}
\newtheorem*{acks}{Acknowledgements}
\theoremstyle{definition}
\newtheorem{defn}[thm]{Definition}
\newtheorem{notn}[thm]{Notation}
\newcommand{\marginparstretch}{0.6}
\let\oldmarginpar\marginpar
\renewcommand\marginpar[1]{\-\oldmarginpar[\framebox{\setstretch{\marginparstretch}\begin{minipage}{\marginparwidth}{\raggedleft\scriptsize #1}\end{minipage}}]{\framebox{\setstretch{\marginparstretch}\begin{minipage}{\marginparwidth}{\raggedright\scriptsize #1}\end{minipage}}}}
\def\trianglepicscaleA{0.9}
\def\trianglepicscaleB{1.0}
\newcommand\trianglepic[1][0]{
\def\birat{2} \def\onemorph{1} \def\twomorphintro{0} \def\twoopts{3} \def\twomorphtext{4} \def\twomorphtextother{5} \def\biratB{6}
\def\plainintro{7}

\def\arg{#1}

\ifx\arg\biratB\def\posa{-0.9,-0.3}\else\def\posa{-1.3,0}\fi
\ifx\arg\biratB\def\posb{-0.25,0.7}\else\def\posb{0,0.65}\fi
\ifx\arg\biratB\def\posc{0.1,-0.65}\else\def\posc{0,-0.65}\fi
\def\posd{0.5,0.1}

\def\birattri{
\node (0) at (\posa) {$X_1$};
\node (1) at (\posb) {$X_2$};
\node (2) at (\posc) {$X_3$};
\ifx\arg\biratB\node (3) at (\posd) {$X_4$};\fi
\draw[<->,longdashed] (0) -- (1);
\draw[<->,longdashed] (1) -- (2);
\draw[<->,longdashed] (2) -- (0);
\ifx\arg\biratB
\draw[<->,longdashed] (0) -- (3);
\draw[<->,longdashed] (1) -- (3);
\draw[<->,longdashed] (2) -- (3);
\fi
}

\def\derivtri{
\node (0) at (\posa) {$\D(X_1)$};
\node (1) at (\posb) {$\D(X_2)$};
\node (2) at (\posc) {$\D(X_3)$};
}

\ifx\arg\birat\birattri\fi
\ifx\arg\biratB\birattri\fi

\ifx\arg\plainintro\derivtri\fi
\ifx\arg\onemorph\derivtri\fi
\ifx\arg\twomorphintro\derivtri\fi
\ifx\arg\twoopts\derivtri\fi
\ifx\arg\twomorphtext\derivtri\fi
\ifx\arg\twomorphtextother\derivtri\fi

\def\drawA{\draw[->]}
\def\drawB{\draw[->]}
\def\drawC{\draw[->]}

\ifx\arg\onemorph
\def\drawA{\draw[->,transform canvas={xshift=-1pt,yshift=3pt}]}
\def\drawAr{\draw[<-,transform canvas={xshift=1pt,yshift=-3pt}]}
\def\drawB{\draw[->,transform canvas={xshift=3pt,yshift=0pt}]}
\def\drawBr{\draw[<-,transform canvas={xshift=-3pt,yshift=0pt}]}
\def\drawC{\draw[<-,transform canvas={xshift=-1pt,yshift=-3pt}]}
\def\drawCr{\draw[->,transform canvas={xshift=1pt,yshift=3pt}]}
\fi

\def\biratarrows{
\draw[<->,longdashed] (0) -- (1);
\draw[<->,longdashed] (1) -- (2);
\draw[<->,longdashed] (2) -- (0);
}

\ifx\arg\birat\biratarrows\fi
\ifx\arg\biratB\biratarrows\fi

\ifx\arg\birat\else\ifx\arg\biratB\else

\drawA (0) -- 
   node[pos=0.55,above left] {\ifx\arg\twoopts $\fun_{21}$ \fi \ifx\arg\twomorphtextother $\fun_{21}$ \fi \ifx\arg\twomorphtext $\fun_{21}$ \fi \ifx\arg\twomorphintro $\fun_1$ \fi \ifx\arg\plainintro $\fun_1$ \fi}  (1);

\ifx\arg\twoopts\else
\ifx\arg\twomorphtextother
\drawB (2) -- 
   node[right] {$\fun_{23}$} (1);
\else
\drawB (1) -- 
   node[right] {\ifx\arg\twomorphtextother $\fun_{32}$ \fi \ifx\arg\twomorphtext $\fun_{32}$ \fi \ifx\arg\twomorphintro $\fun_2$ \fi \ifx\arg\plainintro $\fun_2$ \fi} (2);
\fi
\fi

\drawC (0) -- 
   node[pos=0.55,below left] {\ifx\arg\twoopts $\fun_{31}$ \fi \ifx\arg\twomorphtext $\fun_{31}$ \fi \ifx\arg\twomorphtextother $\fun_{31}$ \fi \ifx\arg\twomorphintro $\fun_3$ \fi \ifx\arg\plainintro $\fun_3$ \fi} (2);

\ifx\arg\onemorph
\drawAr (0) -- (1);
\drawBr (1) -- (2);
\drawCr (0) -- (2);
\fi

\fi\fi

\def\rel{
\draw[->,dotted] ($(0,-0.5)+(100:0.45)$) arc (100:160:0.45);
\node (t) at (-0.40,0.04) {$\Tw_2$};
}
\def\relother{
\draw[->,dotted] ($(0,0.5)+(-100:0.45)$) arc (-100:-160:0.45);
\node (t) at (-0.38,+0.02) {$\Tw_3$};
}

\ifx\arg\twomorphintro\rel\fi
\ifx\arg\twomorphtext\rel\fi
\ifx\arg\twomorphtextother\relother\fi
}
\begin{document}

\title[Derived equivalences for simplices of higher-dimensional flops]{Relating derived equivalences for \\ simplices of higher-dimensional flops}

\author{W.\ Donovan}
\address{W.\ Donovan, Yau Mathematical Sciences Center, Tsinghua University, Haidian District, Beijing 100084, China.}
\email{donovan@mail.tsinghua.edu.cn}

\thanks{I am supported by the Yau MSC, Tsinghua University, and the Thousand Talents Plan. I am grateful for travel support from EPSRC Programme Grant EP/R034826/1}

\begin{abstract} 
I study a sequence of singularities in dimension~$4$ and above, each given by a cone of rank~$1$ tensors of a certain signature, which have crepant resolutions whose exceptional loci are isomorphic to cartesian powers of the projective line. In each dimension~$n$, these resolutions naturally correspond to vertices of an $(n-2)$-simplex, and flops between them correspond to edges of the simplex. I show that each face of the simplex may then be associated to a certain relation between flop functors.
\end{abstract}
\subjclass[2010]{Primary 14F08; Secondary 14J32, 18G80}


\keywords{Calabi--Yau manifolds, crepant resolutions, tensors, derived category, derived equivalence, birational geometry, flops, simplices.}

\maketitle
\setcounter{tocdepth}{1}


\section{Introduction}

This note is motivated by a desire to better understand the derived autoequivalence groups of $4$-folds and beyond, in particular the contributions coming from birational geometry. I study triangles of birational maps between resolutions of certain singularities, and find that the corresponding flop functors obey a pleasing relation involving spherical twists, extending a well-known story for $3$-folds related by Atiyah flops. 

\subsection{Singularities and resolutions} Consider the $n$-fold singular cone for $n\geq 3$ given by the rank~$1$ tensors of signature $2^{n-1}$ as follows.
\[ Z = \{ v_1 \otimes \dots \otimes v_{n-1} \in V_1 \otimes \dots \otimes V_{n-1} \} \qquad V_i \cong \C^2 \]
By a straightforward construction explained later, $Z$ has $n-1$ crepant resolutions. These will be written $X_i$ for $i=1,\dots,n-1$. Each is given by replacing the singularity $0 \in Z$ by an exceptional locus $\Exc \cong (\P^1)^{n-2}$ which, for a given $i$, arises as a product of the $\P V_j$ for $j\neq i$. 

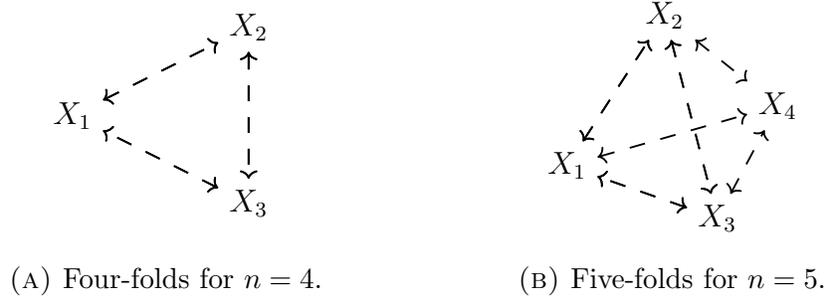
\begin{figure}[h]
\begin{minipage}[b]{.45\linewidth}
\begin{center}
\begin{tikzpicture}[scale=\trianglepicscaleA] \trianglepic[2] \end{tikzpicture}
\vspace{0.2cm}
\end{center}
\subcaption{Four-folds for $n=4$.}
\end{minipage}
\begin{minipage}[b]{.45\linewidth}
\begin{center}
\begin{tikzpicture}
\trianglepic[6]
\end{tikzpicture}
\end{center}
\subcaption{Five-folds for $n=5$.}
\end{minipage}
\caption{Resolutions and birational maps for small $n$.}
\label{functors4I}
\end{figure}

For $n=3$, we have the $3$-fold Atiyah flop between two resolutions of the cone of singular $2\times 2$ matrices. In general, assigning each resolution $X_i$ to a vertex of an $(n-2)$-simplex, the edges of the simplex correspond to birational maps $X_i \birational X_j $ as illustrated in Figure~\ref{functors4I}. For~$n=4$ we have a~triangle of $4$-folds, and for~$n=5$ we have a~tetrahedron of $5$-folds.

\subsection{Equivalences} The birational maps appearing here are family Atiyah flops, and therefore have associated flop functors, which are derived equivalences, illustrated in Figure~\ref{functors4J}.

\begin{figure}[h]
\begin{center}
\begin{tikzpicture}[scale=\trianglepicscaleB] \trianglepic[1] \end{tikzpicture}
\end{center}
\caption{Flop functors for $n=4$.}
\label{functors4J}
\end{figure}
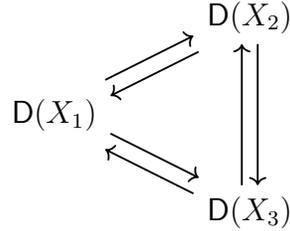

This work grew out of interest in describing `derived monodromy' for \mbox{Figure~\ref{functors4J}}, namely an autoequivalence given by composition of a $3$-cycle of equivalences in the triangle. However, it seems that a question with a neater answer is the following.

\begin{center}\it How are routes between two of the vertices of Figure~\ref{functors4J} related?\end{center}

Theorem~\ref{mainthm.equiv} below gives such a relation where we take, without loss of generality, the two vertices $X_1$ and $X_3$. Furthermore it gives a relation associated to each face ($2$-simplex) in the analogous diagram for $n>4$. 

\subsection{Result} I prove the following.

\begin{keythm}[Theorems~\ref{thm.keynatiso} and~\ref{thm.equiv}]\label{mainthm.equiv} For $n\geq 4$, write flop functors as follows.
\smallskip
\begin{center}
\begin{tikzpicture}[scale=\trianglepicscaleB]
\trianglepic[7]
\end{tikzpicture}
\end{center} 
Then there is a natural isomorphism
\[  \fun_3 \cong \Tw_2 \comp \fun_2 \comp \fun_1 \]
where $\Tw_2$ is one of the following.
\begin{description}
\item[Case $n=4$] a spherical twist around the torsion sheaf $\cO_\Exc (0,-1)$ on $X_3$, where $\Exc \cong \P V_1 \times \P V_2$, given in Definition~\ref{defn.twist}.
\item[Case $n>4$] a family version of this spherical twist, given in Definition~\ref{defn.twistfam}, over base $\P V_4 \times \dots \times \P V_{n-1}$.
\end{description}
\end{keythm}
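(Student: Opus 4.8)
The plan is to route all three flop functors through a single common roof over $Z$ and then extract the discrepancy between the direct flop and the composite from Orlov's blow-up formula. Write $B_i=\prod_{j\neq i}\P V_j$, so that $X_i=\Tot\bigl(V_i\otimes\bigotimes_{j\neq i}\cO_{\P V_j}(-1)\bigr)$ over $B_i$ with $\Exc_i$ the zero section, and let $Y$ be the total space of $\cO(-1,\dots,-1)$ over $\prod_{j=1}^{n-1}\P V_j$; parametrising a rank-$1$ tensor together with all of its factor lines, $Y$ maps to each $X_i$ by forgetting the $i$-th line, and one checks that $q_i\colon Y\to X_i$ is the blow-up of $X_i$ along $\Exc_i$, with exceptional divisor the common zero section $D\cong(\P^1)^{n-1}$ of $Y$ and $q_i|_D\colon D\to\Exc_i$ the projection forgetting the $i$-th factor. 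Since the maps here are family Atiyah flops, each flop functor is the corresponding roof transform (the relative form of the Bondal--Orlov/Bridgeland description): $\fun_1\cong\RDerived q_{2*}\LDerived q_1^*$, $\fun_2\cong\RDerived q_{3*}\LDerived q_2^*$, $\fun_3\cong\RDerived q_{3*}\LDerived q_1^*$. Finally, for $n>4$ all of $X_1,X_2,X_3$, the roof $Y$, the functors $\fun_i$ and the twist are linear over $S:=\P V_4\times\dots\times\P V_{n-1}$ and fibre over each point of $S$ into the $n=4$ picture up to a line-bundle twist from $S$; so it suffices to run the argument relatively over $S$, which specialises to $n=4$ when $S$ is a point.

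Now $\fun_3\cong\RDerived q_{3*}\LDerived q_1^*$ while $\fun_2\comp\fun_1\cong\RDerived q_{3*}\,\LDerived q_2^*\RDerived q_{2*}\,\LDerived q_1^*$, so the two routes differ only through the counit $\LDerived q_2^*\RDerived q_{2*}\to\id$ applied in the middle. For the codimension-$2$ blow-up $q_2$ along $\Exc_2$, Orlov's formula gives $\D(Y)=\bigl\langle\, j_*\bigl(p_2^*\D(\Exc_2)\otimes\cO_D(-1)\bigr),\ \LDerived q_2^*\D(X_2)\,\bigr\rangle$ with $j\colon D\into Y$, $p_2=q_2|_D$, and the cone of the counit is the component in the first summand: $\Cone\bigl(\LDerived q_2^*\RDerived q_{2*}M\to M\bigr)\cong j_*\bigl(p_2^*N_M\otimes\cO_D(-1)\bigr)$ for an object $N_M\in\D(\Exc_2)$ functorial in $M$ (essentially $\RDerived p_{2*}(\LDerived j^*M\otimes\cO_D(1))$). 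Applying $\RDerived q_{3*}$ and precomposing with $\LDerived q_1^*$ yields the functorial distinguished triangle
\[
\fun_2\comp\fun_1(A)\ \longto\ \fun_3(A)\ \longto\ \RDerived(q_3|_D)_*\bigl(p_2^*\,N_{\LDerived q_1^*A}\otimes\cO_D(-1)\bigr)\ \longto\ \fun_2\comp\fun_1(A)[1],
\]
whose third term is supported on $\Exc_3$ because $q_3|_D$ maps $D$ onto $\Exc_3$.

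The crux is to evaluate that third term. On $D\cong(\P^1)^{n-1}$ the maps $q_1|_D$, $q_2|_D$, $q_3|_D$ forget the $\P V_1$-, $\P V_2$-, $\P V_3$-factor respectively, and $\cO_D(-1)=\cO(-1,\dots,-1)$ (up to a twist pulled back from $S$). Since $\LDerived j^*\LDerived q_1^*A$ is pulled back along $q_1|_D$, hence constant in the $\P V_1$-direction, the $\cO_D(1)$ in the Orlov projection endows $N_{\LDerived q_1^*A}$ with an $\cO_{\P V_1}(+1)$; tensoring the third term by $\cO_D(-1)$ then cancels the $\P V_1$-twist, leaves $\cO(-1)$ in the $\P V_2$-direction, and $\RDerived(q_3|_D)_*$ integrates out the $\P V_3$-direction. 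Using $\RDerived\Gamma(\P^1,\cO(-1))=0$ repeatedly to collapse the remaining $\D(\Exc_2)$-worth of data, the third term becomes $E\otimes_{\C}V^\bullet(A)$ --- and, for $n>4$, $E$ tensored with a twisted pullback of a complex on $S$ --- where $E=\cO_\Exc(0,-1)$ with $\Exc\cong\P V_1\times\P V_2$ is precisely the stated torsion sheaf and $V^\bullet(A)$ is, after tracking shifts, canonically $\RHom_{X_3}\bigl(E,\fun_2\comp\fun_1(A)\bigr)$ with connecting map the evaluation. The triangle is then the triangle defining $\Tw_E$ applied to $\fun_2\comp\fun_1(A)$, giving $\fun_3\cong\Tw_E\comp\fun_2\comp\fun_1$. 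That $\Tw_E$ is the spherical twist of Definition~\ref{defn.twist} (and its family version Definition~\ref{defn.twistfam}) follows from a short computation with the Koszul resolution of $\iota_{3*}E$: since $N_{\Exc/X_3}=\cO(-1,-1)^{\oplus2}$ and $H^\bullet(\P^1\times\P^1,\cO(-2,-2))=\C[-2]$, one gets $\RHom_{X_3}(E,E)=\C\oplus\C[-4]$ (fibrewise over $S$ when $n>4$).

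The main obstacle is the collapse and the canonical identification just described: fixing shifts and signs so that one obtains $\Tw_E$ itself rather than $\Tw_E^{-1}$, a shift of it, or $\Tw_{E[k]}$; and, more substantially, proving that the composite of the two blow-down adjunctions really reduces the a~priori $\D(\Exc_2)$-indexed correction to the single object $E$ together with the evaluation map. This will require a careful derived base change on $Y$ and relative Serre duality for $q_3|_D$ (equivalently $\iota_3^!=\LDerived\iota_3^*(-)\otimes\cO(-2,-2)[-2]$), organised around the mutual compatibility of the three blow-down structures on $Y$ and the systematic vanishing $H^\bullet(\P^1,\cO(-1))=0$; the relative bookkeeping over $S$ for $n>4$ adds nothing essential once the absolute case is in hand.
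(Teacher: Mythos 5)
Your proposal takes a genuinely different route from the paper. The paper never forms a functorial comparison triangle: it evaluates both sides on the tilting bundle $\cT=\cO\oplus\cO(-1,0)\oplus\cO(0,-1)\oplus\cO(-1,-1)$ (a relative tilting bundle over $B$ when $n>4$), identifies the single discrepant summand via the Euler and Koszul sequences as $\Tw_2^{-1}\fun_{31}(\cO(-1,0))$, and then upgrades the object-level agreement to a natural isomorphism through the tilting equivalence, pinning down the induced automorphism of $\End(\cT)$ (resp.\ of $\tau_*\sEnd(\cT)$) by restricting to the complement of $\Exc$ and using normality. Your roof $Y$ is the paper's $Q$, and your structural mechanism is correct as far as it goes: the complement of $\LDerived q_2^*\D(X_2)$ in the relevant semiorthogonal decomposition is exactly $\ker \RDerived q_{2*}=j_*(p_2^*\D(\Exc_2)\otimes\cO_D(-1))$ (this also settles your worry about which twist appears), the cone of the counit lands there, and because $\LDerived j^*\LDerived q_1^*A$ is pulled back along $p_1$ the $\D(\Exc_2)$-component of $\LDerived q_1^*A$ has the special form $\cO_{\P V_1}(1)\boxtimes b(A)$ with $b(A)\in\D(\P V_3)$ (up to shift), so that pushing forward with the $\cO_D(-1)$-twist does collapse the correction term to $\cO_\Exc(0,-1)\otimes R\Gamma(\P V_3,b(A)\otimes\cO(-1))$ up to shift; for $A=\cO(-1,0)$ this reproduces the paper's computation $\fun_{32}\fun_{21}(\cO(-1,0))\cong\Cone(\cO(0,-1)\to\cO_\Exc(0,-1))[-1]$.

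The genuine gap is the final identification, which you flag but do not close. Knowing that the third term of your triangle is isomorphic -- even functorially, which itself requires realizing the counit triangle at the level of Fourier--Mukai kernels so that ``the cone'' is a well-defined functor -- to $\cO_\Exc(0,-1)\otimes\RHom_{X_3}(\cO_\Exc(0,-1),\fun_2\fun_1(A))[1]$ does not yield $\fun_3\cong\Tw_2\comp\fun_2\comp\fun_1$: one must show the connecting morphism into $\fun_2\fun_1(A)$ is the evaluation (counit) map of the twist, compatibly in $A$; a triangle with the same outer terms but a different (e.g.\ zero) connecting map has a different middle term. This ``the map is the canonical one'' step is precisely where the flop-flop $=$ twist literature spends its effort (compare \cite{ADM,Bar,BB}), and naming base change and relative Serre duality is not yet an argument. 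Two ways to finish: either carry out the kernel-level identification directly, or borrow the paper's device -- having matched the two functors on the (relative) tilting bundle, consider the composite $\fun_{21}^{-1}\comp\fun_{32}^{-1}\comp\Tw_2^{-1}\comp\fun_{31}$, show it fixes $\cT$ and induces the identity on $\tau_*\sEnd(\cT)$ by restricting to $X_1-\Exc_1$ where all the functors intertwine with restriction, and conclude by the tilting equivalence. Your reduction over $S=\P V_4\times\dots\times\P V_{n-1}$ is consistent with the paper's family argument, provided the correction term is formed with $\tau_*\sHom$ over the base so that it matches the family twist of Definition~\ref{defn.twistfam} rather than the absolute one.
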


\noindent For each $n$, replacing the indices $1,2,3$ in the above with general $i,j,k$ gives a relation for each face ($2$-simplex) of the $(n-2)$-simplex.

\begin{keyrem} As a mnemonic for Theorem~\ref{mainthm.equiv}, I draw a diagram as follows.
\smallskip
\begin{center}
\begin{tikzpicture}[scale=\trianglepicscaleB]
\trianglepic
\end{tikzpicture}
\end{center}
\end{keyrem}

I prove Theorem~\ref{mainthm.equiv} by calculating the action of flops and twists on a certain (relative) tilting bundle.

\subsection{Related questions}

As an immediate corollary  of Theorem~\ref{mainthm.equiv}, we get the following formula for~$\Tw_2$.
\begin{equation*}\tag{$\ast$}\label{equation flops} \fun_3 \comp \fun_1^{-1} \comp \fun_2^{-1} \cong \Tw_2
\end{equation*}
There are many formulas of the form `flop-flop = twist' (up to taking inverses) in the literature, including~\cite{ADM,ALb,Bar,BB,DS1,DW1,DW3,JL,Har,Tod}. I~hope~\eqref{equation flops} gives a hint of how they may be generalized to flop cycles of length~3 and above.

For discussion of the relation of Theorem~\ref{mainthm.equiv} to derived monodromy, in particular to the autoequivalence of $\D(X_1)$ given by a composition of $3$~flop functors, see Remark~\ref{rem.monod}.

\subsection{Atiyah flop}\label{sect.Atiyah} For $n=3$, we have resolutions $X_1$ and $X_2$ related by an Atiyah flop, and functors as follows, satisfying the relation shown, where $\Tw$ is a spherical twist about $\cO_\Exc(-1)$.
\begin{center}
\begin{tikzpicture}[scale=1.5]
\node (0) at (-1,0) {$\D(X_1)$};
\node (1) at (0,0)  {$\D(X_2)$};
\node (2) at (1.3,0) {$\id \cong \Tw \circ  \fun_2 \comp \fun_1$};
\draw[->,transform canvas={yshift=+2.5pt}] (0) -- node[above]{$\fun_1$} (1);
\draw[<-,transform canvas={yshift=-2.5pt}] (0) -- node[below]{$\fun_2$} (1);
\end{tikzpicture}
\end{center}
The argument for Theorem~\ref{mainthm.equiv} is an elaboration of a standard argument for this relation: see Section~\ref{sec.discuss} for discussion.

\subsection{Related work} The example $n=4$ is studied extensively by Kite~\cite[Sections~5.3 and~7.2]{Kit}. He gives an action of $\pi_1(\cM)$ on the derived categories~$\D(X_i)$ where $\cM$ is a certain Fayet--Iliopoulos parameter space. I expect Theorem~\ref{mainthm.equiv} in the case $n=4$ also follows from his methods.

Kite uses a realization of the $X_i$ as toric GIT quotients, and the technology of `fractional magic windows'. This may give an alternative, and perhaps swifter, method to prove Theorem~\ref{mainthm.equiv}. However, as it was not needed to obtain the result, I feel the proofs here may be more accessible without it. I also have in mind extensions to more general flops, where a GIT presentation is not available.

Halpern-Leistner and Sam~\cite{HLSam} construct similar actions of $\pi_1$ for GIT problems which are `quasi-symmetric'. This condition does not hold for the GIT quotients realizing the $X_i$.

\begin{keyrem} The constructions here readily generalize to the case of $V_i \cong \C^d$ for any fixed $d>2$, and I expect that similar results can be proved, see Remark~\ref{rem.gen}. Kite notes that, for $n=4$, the above `fractional magic windows' technology may not apply in this new setting~\cite[Example~4.37]{Kit}.\end{keyrem}

\subsection{Contents}Section~\ref{sect.res} describes the resolutions~$X_i$ and flop functors between them. Section~\ref{sect.flopcalc} gives properties of these functors for later use. Sections~\ref{sec.4} and~\ref{sect.higher} prove Theorem~\ref{mainthm.equiv}, first in dimension~$4$ and then in higher dimension by a family construction. Section~\ref{sect.fam} gives further family constructions relating resolutions.

\subsection{Notation} When I write $X^{(n)}$ and similar notations, the $(n)$ denotes the dimension~$n$ of the space, or the relative dimension~$n$ of a family. Letters L and~R indicate derived functors throughout, but are sometimes dropped in Section~\ref{sect.higher} for the sake of readability. The bounded derived category of coherent sheaves is denoted by~$\D(X)$.

\begin{acks}I am grateful for conversations with Tatsuki Kuwagaki, Xun Lin, Mauricio Romo, Ed Segal, and Weilin Su, and thank the organizers of the conference `McKay correspondence, mutation and related topics' at Kavli IPMU for their work to make the meeting a success during the pandemic. I started calculations for this project while visiting Michael Wemyss at University of Glasgow, and am grateful for his hospitality and support there.
\end{acks}

\section{Resolutions}\label{sect.res}

We construct resolutions of the singularity $Z$ from the introduction, namely
\[ Z = \{ v_1 \otimes \dots \otimes v_{n-1} \in V_1 \otimes \dots \otimes V_{n-1} \} \qquad V_i \cong \C^2  \]
for $n\geq 3$. To see that $\dim Z = n$, note that $z\in Z - \{ 0 \}$ taken up to scale determines a point of the cartesian power $(\P^1)^{n-1}$. There are various terminologies for~$Z$. For instance, we may describe it as the cone of rank~$1$ tensors of signature~$2^{n-1}$, or as the simple hypermatrices of order $n-1$ in dimension $2$.

\subsection{Construction} We construct crepant resolutions $X_i$ of $Z$ for $i=1,\dots,n-1$.

\begin{defn} Let $X_1$ be the total space of a rank~$2$ bundle
\[ \lab{X_1} V_1 \otimes \cO(-1,\dots,-1)  \vecto \P V_2 \times \dots \times \P V_{n-1} \]
where the line bundle $\cO(-1,\dots,-1)$ has degree~$-1$ on each factor $\P V_i$. Other spaces $X_i$ are obtained similarly, by applying the cyclic symmetry of the set $\{ V_i \}$ to replace $V_1$ by $V_i$.\end{defn}

\begin{notn} Indices will be written in cyclic order. For instance, for $n=4$ I~write the following.
\begin{align*} 
\lab{X_1} & V_1 \otimes \cO(-1,-1)  \vecto \P V_2 \times \P V_3 \\
\lab{X_2} & V_2 \otimes \cO(-1,-1)  \vecto \P V_3 \times \P V_1 \\
\lab{X_3} & V_3 \otimes \cO(-1,-1)  \vecto \P V_1 \times \P V_2
\end{align*}
\end{notn}

To see the resolution morphism $g_1 \colon X_1 \to Z$ write $X_1$ as follows, where $L_i$ denotes the tautological subspace bundle on $\P V_i$.
\[ \lab{X_1} V_1 \otimes (L_2 \boxtimes \dots \boxtimes L_{n-1}) \vecto \P V_2 \times \dots \times \P V_{n-1} \]
Then the inclusions $L_i \subset V_i$ induce the required morphism, which is easily seen  to contract the zero section $\Exc_1 \subset X_1$ while being an isomorphism elsewhere. The other $g_i$ are obtained similarly.

\begin{notn} Let $\Exc_i \subset X_i$ be the exceptional locus of the resolution morphism $g_i \colon X_i \to Z$. I often drop subscripts and write $\Exc$ for readability.\end{notn}

By the following proposition, the $g_i$ are crepant resolutions.

\begin{prop}\label{prop.cy} Each $X_i$ is Calabi--Yau.\end{prop}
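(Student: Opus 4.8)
The plan is to compute the canonical bundle of $X_1$ directly, using the description of $X_1$ as a total space of a vector bundle over a product of projective lines. Since the $X_i$ are all related by the cyclic symmetry of the $V_i$, it suffices to treat $X_1$; the argument transports verbatim to the other resolutions.

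First I would recall the standard formula for the canonical bundle of the total space of a vector bundle. If $\pi \colon \Tot(\cE) \to B$ is the total space of a vector bundle $\cE$ of rank $r$ on a smooth base $B$, then $\omega_{\Tot(\cE)} \cong \pi^*(\omega_B \otimes \det \cE)$. Here $B = \P V_2 \times \dots \times \P V_{n-1}$, a product of $n-2$ copies of $\P^1$, and $\cE = V_1 \otimes \cO(-1,\dots,-1)$, which has rank $2$. So the two ingredients to assemble are $\omega_B$ and $\det\cE$.

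Next I would compute each ingredient. By the Künneth-type formula for canonical bundles of products, $\omega_B \cong \cO(-2,\dots,-2)$, since $\omega_{\P^1} \cong \cO(-2)$ on each factor. For the determinant, $\det(V_1 \otimes \cO(-1,\dots,-1)) \cong (\det V_1) \otimes \cO(-1,\dots,-1)^{\otimes 2} \cong \cO(-2,\dots,-2)$, where I use that $\det V_1$ is a trivial one-dimensional vector space (so contributes nothing) and that tensoring a line bundle by itself doubles its multidegree. Combining, $\omega_B \otimes \det\cE \cong \cO(-2,\dots,-2) \otimes \cO(-2,\dots,-2)$, wait — that would give $\cO(-4,\dots,-4)$, which is not trivial, so I must be more careful: the rank of $\cE$ over each factor contributes via $\det\cE = \cO(-2,\dots,-2)$ only, and one should double-check the convention for the degree of $\cO(-1,\dots,-1)$. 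The correct statement is that $\det\cE \cong \cO(-2,\dots,-2)$ is wrong — since $V_1$ has dimension $2$, $\det(V_1 \otimes \cL) \cong \cL^{\otimes 2} \otimes (\det V_1)$, so with $\cL = \cO(-1,\dots,-1)$ of degree $-1$ on each factor we get $\det\cE$ of degree $-2$ on each factor, matching $\omega_B$ up to sign — and these cancel: $\omega_B$ has degree $-2$ and $\det\cE$ has degree $-2$, so their tensor product has degree $-4$, not $0$. This indicates the convention must instead be that $\cO(-1,\dots,-1)$ is such that the relevant cancellation occurs; I would resolve this by pinning down conventions so that $\omega_B \otimes \det\cE$ is genuinely trivial, hence $\omega_{X_1} \cong \cO_{X_1}$. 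The cleanest route is to note that the tautological description $X_1 = \Tot(V_1 \otimes (L_2 \boxtimes \dots \boxtimes L_{n-1}))$ makes $X_1$ a subbundle of the trivial bundle $V_1 \otimes \dots \otimes V_{n-1}$ over $B$, and then use the Euler sequence on each $\P V_i$ to track $\omega_B$ as a product of the $\det(V_i/L_i)$ twisted appropriately, which will cancel against $\det\cE$ precisely because the full tensor product $V_1 \otimes \dots \otimes V_{n-1}$ has trivial determinant. The main obstacle, then, is purely bookkeeping: getting the multidegree conventions and the Euler-sequence twists exactly right so the cancellation is manifest; there is no conceptual difficulty, since crepancy of these resolutions is essentially forced by the structure of the tensor cone.

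Finally, having shown $\omega_{X_1} \cong \cO_{X_1}$, I would remark that smoothness of $X_1$ is clear (it is the total space of a vector bundle over a smooth base), so $X_1$ is Calabi--Yau in the relevant sense, and invoke the cyclic symmetry to conclude the same for all $X_i$. Combined with the earlier observation that $g_i$ is an isomorphism away from $\Exc_i$, this shows the $g_i$ are crepant resolutions, completing the proof.
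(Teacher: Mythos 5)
Your two computations are exactly the right ingredients, and they are both correct: $\omega_B \cong \cO(-2,\dots,-2)$ for $B = \P V_2 \times \dots \times \P V_{n-1}$, and $\det\cE \cong \cO(-2,\dots,-2)$ for $\cE = V_1 \otimes \cO(-1,\dots,-1)$ since $\dim V_1 = 2$. The genuine gap is the formula you start from: for $\pi\colon \Tot(\cE)\to B$ the relative cotangent bundle is $\pi^*\cE^\vee$ (the fibres are the vector spaces $\cE_b$, whose functions are $\Sym^\bullet \cE_b^\vee$), so the correct statement is $\omega_{\Tot(\cE)} \cong \pi^*\bigl(\omega_B \otimes \det\cE^\vee\bigr)$, not $\pi^*(\omega_B \otimes \det\cE)$. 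With the correct sign, your two computations cancel on the nose and give $\omega_{X_1} \cong \cO_{X_1}$ immediately, which is precisely the paper's argument (``both its determinant, and the canonical bundle of its base, are isomorphic to $\cO(-2,\dots,-2)$''); cyclic symmetry then handles the other $X_i$.

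Instead of correcting the formula, you diagnose the resulting $\cO(-4,\dots,-4)$ as a problem with the paper's degree conventions and propose to ``pin down conventions so that the cancellation occurs,'' then sketch an Euler-sequence route whose punchline is that ``crepancy is essentially forced by the structure of the tensor cone.'' That does not repair the argument: the conventions are fine, the mismatch is entirely due to the missing dual, and the alternative route is not carried out (and the appeal to triviality of $\det(V_1\otimes\dots\otimes V_{n-1})$ is not what makes the cancellation work). Replace the formula by $\omega_{\Tot(\cE)} \cong \pi^*(\omega_B \otimes \det\cE^\vee)$ and delete the speculation, and your proof becomes complete and identical in substance to the paper's.
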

\begin{proof} The total space of the bundle $X_1$ is Calabi--Yau because both its determinant, and the canonical bundle of its base $\P V_2 \times \dots \times \P V_{n-1}$, are isomorphic to $\cO(-2,\dots,-2)$. The same holds for the other $X_i$ by the cyclic symmetry.\end{proof}

I set notation for sheaves and bundles on the $X_i$ for $n=4$.

\begin{notn}\label{sect bun notation} For $X_1$, let $\cO_\Exc(a,b)$ denote a line bundle on $\Exc_1 \cong \P V_2 \times \P V_3$ considered as a torsion sheaf on $X_1$. Let $\cO(a,b)$ denote a line bundle on  $X_1$ given by pullback from the base $\P V_2 \times \P V_3$. Similar notations are used for the other $X_i$. 
\end{notn}

\subsection{Flop functors}
\label{sect flop fun}

If we blow up the zero section $\Exc_i \subset X_i$ for any $i$ we obtain
\[ \lab{Q} \cO(-1,\dots,-1)  \vecto \P V_1 \times \dots \times \P V_{n-1} \]
with a blowup map $f_i \colon Q \to X_i$. Using these $f_i$ to form birational roofs, we have birational maps
\[ \phi_{ji} \colon X_i \srational X_j. \]

\begin{defn}\label{def.ffunc} We write functors
\[ \fun_{ji} = \RDerived f_{j*} \circ \LDerived f_i^* \colon \D(X_i) \longrightarrow \D(X_j). \]
\end{defn}

These functors are equivalences. For $n=3$ they are simply the Bondal--Orlov equivalences for the Atiyah flop. For $n\geq 4$, the $\phi_{ji}$ are family Atiyah flops which implies that the $\fun_{ji}$ are equivalences. This is explained in Section~\ref{sect.fam_descrip} for $n=4$, and a similar argument using Proposition~\ref{prop.gen_fam} suffices for general $n$.

\begin{rem} For $n=4$ the birational maps $\phi_{ji}$ may be drawn as follows. Note that at each $X_i$ the two birational maps have the same exceptional locus $\P^1 \times \P^1$, but that each is a flop of a different ruling.
\begin{center}
\begin{tikzpicture}[scale=\trianglepicscaleA] 
\trianglepic[2]
\end{tikzpicture}
\end{center}
\end{rem}

\begin{rem}\label{rem.gen} The constructions in this section extend to the setting where $V_i \cong \C^d$ for any fixed $d \geq 2$. The flops appearing are then families of standard flops of~$\P^{k-1}$. It would be interesting to prove similar results in this case. For comparison, see \cite{ADM} for a `flop-flop = twist' formula for such flops.
\end{rem}

\subsection{Notation for families}
\label{sect notat}

For a given $n$, each resolution may be constructed as a non-trivial family of the analogous resolution for $n-1$. I explain the $n=4$ case in the following Section~\ref{sect.flopcalc}. The general case, which is analogous but notationally more complex, is deferred to Proposition~\ref{prop.iterate}. Here I give the notation that will be used in these constructions.

\smallskip

To specify a particular $n$, the notation~$X_i^{(n)}$ is used. To specify furthermore the vector spaces used in the construction, I write the following.
\[X_i^{(n)}(V_1,\dots,V_{n-1})\]
The construction will often be repeated in a family, replacing the vector spaces $V_i$ with vector bundles $\pi_i$ over some base $B$. The result is denoted as follows. 
\[\familyX_i^{(n)}(\pi_1,\dots,\pi_{n-1})\]
Finally, similar notations are used for other constructions, for instance the birational roof~$Q$ from Section~\ref{sect flop fun} may be written as $Q^{(n)}(V_1,\dots,V_{n-1}).$

\section{Flop calculations}\label{sect.flopcalc}

I explain how each resolution for $n=4$ may be constructed as a non-trivial family of the analogous resolutions for $n=3$, and use this to calculate the effect of the flop functors for $n=4$ on certain objects, for use in the proof of Theorem~\ref{thm.keynatiso}. This calculation is routine, but I write it out in full to show how the non-triviality is handled, and anticipating a further family version in Section~\ref{sect.higher}.

\subsection{Family construction}\label{sect.fam_descrip}

Recall that we take the following.
\[\lab{X_1^{(4)}} V_1 \otimes \cO(-1,-1)  \vecto \P V_2 \times \P V_3\]
The proposition below realizes $X_1^{(4)}$ as a family  of copies of $X_1^{(3)}$ over  $\P V_3$.

\smallskip

\begin{prop}\label{prop.fam4} Using the notation of Section~\ref{sect notat}, we have that
\begin{equation}\label{eqn.family1} X_1^{(4)}(V_1, V_2, V_3) \cong \familyX_1^{(3)} (\pi_1, \pi_2) \end{equation}
where we take bundles
\begin{align*} 
\lab{\pi_1} & V_1\otimes\cO \vecto  \P V_3 \\
\lab{\pi_2} & V_2\otimes\cO(-1) \vecto  \P V_3
\end{align*}
so that $\pi_1$ is the trivial bundle with fibre $V_1$. 
\end{prop}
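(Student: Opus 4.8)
The plan is to unwind both sides of \eqref{eqn.family1} as total spaces of explicit rank-$2$ bundles over $\P V_3$ and match them directly. First I would recall that, by construction,
\[ X_1^{(4)}(V_1,V_2,V_3) = \Tot\bigl( V_1 \otimes \cO(-1,-1) \to \P V_2 \times \P V_3 \bigr), \]
and view the projection $\P V_2 \times \P V_3 \to \P V_3$, so that $X_1^{(4)}$ fibres over $\P V_3$ with fibre over a point the total space of $V_1 \otimes \cO(-1,-1)|_{\P V_2 \times \{\pt\}} = V_1 \otimes \cO_{\P V_2}(-1)$ over $\P V_2$ — which is exactly $X_1^{(3)}(V_1,V_2)$. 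So the isomorphism type of the fibres is already clear; the work is to identify the family structure, i.e.\ to see how the $\P V_3$-direction twists things.

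The key step is to perform the family version of the $n=3$ construction with the prescribed bundles $\pi_1,\pi_2$ over $B = \P V_3$. By definition $\familyX_1^{(3)}(\pi_1,\pi_2)$ is obtained by running the $X_1^{(3)}$ recipe relatively over $B$: form the relative projectivization $\P_B(\pi_2) = \P V_2 \times \P V_3$ (since $\pi_2 = V_2 \otimes \cO(-1)$ is $V_2\otimes\cO$ twisted by a line bundle, its projectivization agrees with $\P V_2 \times \P V_3$), carrying a relative tautological subbundle $L'$, and then take the total space of $\pi_1 \otimes L'$ over $\P_B(\pi_2)$. I would compute $L'$: the tautological subbundle of $\P_B(\pi_2)$ is $\cO_{\P V_2}(-1) \otimes (\text{twist by } \cO_{\P V_3}(-1))$ coming from the $\cO(-1)$ factor in $\pi_2$, hence $L' \cong \cO_{\P V_2}(-1) \boxtimes \cO_{\P V_3}(-1) = \cO(-1,-1)$ on $\P V_2 \times \P V_3$. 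Then $\pi_1 \otimes L' = (V_1 \otimes \cO) \otimes \cO(-1,-1) = V_1 \otimes \cO(-1,-1)$, whose total space over $\P V_2 \times \P V_3$ is precisely $X_1^{(4)}(V_1,V_2,V_3)$. This gives the isomorphism \eqref{eqn.family1}, and I would note it is compatible with the projections to $\P V_3$ and with the resolution maps to $Z$.

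The one point requiring care — and the main potential obstacle — is the behaviour of relative projectivization under tensoring by a line bundle, namely the canonical identification $\P_B(\pi_2 \otimes \cL) \cong \P_B(\pi_2)$ together with the twist $\cO_{\P_B(\pi_2\otimes\cL)}(-1) \cong \cO_{\P_B(\pi_2)}(-1) \otimes p^*\cL$ of the tautological subbundle; getting this twist exactly right (including signs and which factor it lands on) is what makes the family \emph{non-trivial} rather than a product, and is the content the author flags as worth writing out. Everything else is bookkeeping: checking that the recipe for $\familyX_1^{(3)}$ is literally the fibrewise $X_1^{(3)}$ recipe, and that the trivial choice $\pi_1 = V_1 \otimes \cO$ contributes no twist. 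I would close by remarking that the analogous statements for $X_2^{(4)}$ and $X_3^{(4)}$, and the compatibility with the flop functors, follow by the cyclic symmetry, deferring the genuinely $n$-fold iterated version to Proposition~\ref{prop.iterate}.
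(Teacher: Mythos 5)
Your proposal is correct and takes essentially the same route as the paper: identify $\Tot(\P\pi_2)\cong \P V_2\times \P V_3$ and track the relative tautological bundle, $\cO_{\P\pi_2}(-1)\cong\cO(-1,-1)$ with the second $-1$ coming from the twist in the definition of $\pi_2$, so that $\pi_1\otimes\cO_{\P\pi_2}(-1)\cong V_1\otimes\cO(-1,-1)$ recovers $X_1^{(4)}$. You simply make explicit the $\P(E\otimes\cL)\cong\P(E)$ twisting behaviour that the paper's one-line proof leaves implicit.
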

\begin{proof}
Writing $ \Tot (\P \pi_2) $ for the total space of $\P \pi_2$, there is an isomorphism
\[ \Tot (\P \pi_2) \cong \P V_2 \times \P V_3 \] under which $\cO_{\P \pi_2}(-1)$ corresponds to $\cO(-1,-1)$, where the second $-1$ comes from the definition of~$\pi_2$, giving the claim.
\end{proof}

We now extend the argument to birational roofs. We have an isomorphism 
\begin{equation}\label{eqn.family2} X_2^{(4)}(V_1, V_2, V_3) \cong \familyX_2^{(3)} (\pi_1, \pi_2) \end{equation}
using that $ \Tot (\P \pi_1) \cong \P V_3 \times \P V_1$ under which $\cO_{\P \pi_1}(-1)$ corresponds to $\cO(0,-1)$.
Furthermore, we have an isomorphism
\begin{equation}\label{eqn.family3}Q^{(4)}(V_1, V_2, V_3) \cong \familyQ^{(3)}(\pi_1, \pi_2)\makebox[0pt]{ .} 
\end{equation}
The isomorphisms \eqref{eqn.family1}, \eqref{eqn.family2} and \eqref{eqn.family3} intertwine birational roof diagrams of blowup maps $f_i \colon Q \to X_i$ as follows.
\begin{center}
\begin{tikzpicture}[scale=0.6,xscale=1.7]
\node (14) at (0,0) {$X_1^{(4)}(V_1, V_2, V_3) $};
\node (r4) at (1,1) {$Q^{(4)}(V_1, V_2, V_3) $};
\node (24) at (2,0) {$X_2^{(4)}(V_1, V_2, V_3) $};
\draw[->] (r4) -- (14);
\draw[->] (r4) -- (24);
\end{tikzpicture}
\qquad
\begin{tikzpicture}[scale=0.6,xscale=1.5]
\node (13) at (0,0) {$\familyX_1^{(3)} (\pi_1, \pi_2)$};
\node (r3) at (1,1) {$Q^{(3)}  (\pi_1, \pi_2)$};
\node (23) at (2,0) {$\familyX_2^{(3)} (\pi_1, \pi_2)$};
\draw[->] (r3) -- (13);
\draw[->] (r3) -- (23);
\end{tikzpicture}
\end{center}
The right-hand diagram is a family of $3$-fold Atiyah flops. It follows that the flop functor $\fun_{21}$ from Definition~\ref{def.ffunc} using the left-hand diagram is an equivalence.

\subsection{Flop functors}\label{section.flop}

The above construction lets us calculate the effect of the flop $X_1 \birational X_2$ for $n=4$ on the derived category. Recall that $\cO(a,b)$ denotes a certain line bundle on each of the $X_i$ by the convention of Notation~\ref{sect bun notation}.

\begin{prop}\label{prop flop action general} For $n=4$, the flop functor
\[ \fun \colon \D(X_1) \to \D(X_2) \]
acts as follows, where we write projections $\rho_i\colon X_i \to \P V_3$.

\begin{enumerate}
\item\label{prop flop action general a} For any $\varobj \in \D(\P V_3)$, taking $\cA_1,\cA_0 \in \D(X_1)$ given by
\begin{align*}
\cA_1 &= \rho_1^* \varobj \otimes \cO(-1,0) \\
\cA_0 &= \rho_1^* \varobj
\end{align*}
we have the following.
\begin{align*}
\fun(\cA_1) & \cong \rho_2^* \varobj \otimes \cO(1,1)  \\
\fun(\cA_0) & \cong \rho_2^* \varobj
\end{align*}

\item\label{prop flop action general b} There exist canonical inclusions, given at the end of the proof, such that the following diagram commutes, with $\Hom$s taken in the derived category.
\begin{center}
\begin{tikzpicture}[scale=0.8,xscale=1.3]

\node (1) at (-1,0) {$\Hom(\cA_1,\cA_0)$};
\node (2) at (1,0)  {$\Hom(\fun(\cA_1),\fun(\cA_0))$};
\node (1b) at (-0.05,-1) {$V_2^\vee$};

\draw[->] (1) to node[above]{\scriptsize $\fun$} (2);
\draw[left hook->] (1b) to (1);
\draw[right hook->] (1b) to (2);

\end{tikzpicture}
\end{center}
\end{enumerate}
\end{prop}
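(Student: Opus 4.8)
The plan is to read off the action of $\fun$ on $\cA_0,\cA_1$ from the blow-up presentation of $\fun$ (equivalently, from the relative Atiyah flop of Section~\ref{sect.fam_descrip}), and then to obtain the commuting triangle by restricting everything to the smooth locus $Z^\circ=Z\setminus\{0\}$, on which $\fun$ is a genuine isomorphism and the two relevant line bundles become canonically identified.

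For part~\ref{prop flop action general a}, write $Q=Q^{(4)}(V_1,V_2,V_3)$. Since $\rho_i\comp f_i$ is the common projection $Q\to\P V_3$, the projection formula gives a natural isomorphism $\fun(\rho_1^*\varobj\otimes\cM)\cong\rho_2^*\varobj\otimes\fun(\cM)$, so it is enough to compute $\fun(\cO)$ and $\fun(\cO(-1,0))$. One has $\LDerived f_1^*\cO=\cO_Q$ and $\RDerived f_{2*}\cO_Q=\cO_{X_2}$ ($f_2$ being the blow-up of a smooth codimension-$2$ centre), so $\fun(\cO)\cong\cO$. For $\cO(-1,0)$ the key input is the blow-up identity $\cO_Q(\Exc)=\cO_Q(-1,-1,-1)$, $\Exc\subset Q$ being the zero section, which is the exceptional divisor of both blow-ups $f_1$ and $f_2$; a routine check in coordinates then gives $\LDerived f_1^*\cO(-1,0)=f_2^*\cO(1,1)\otimes\cO_Q(\Exc)$, and since $\RDerived f_{2*}\cO_Q(\Exc)\cong\cO_{X_2}$ (standard: use $0\to\cO_Q\to\cO_Q(\Exc)\to\cO_\Exc(\Exc)\to0$ and $\RDerived f_{2*}\cO_\Exc(\Exc)=0$), the projection formula yields $\fun(\cO(-1,0))\cong\cO(1,1)$. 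Reinstating $\varobj$ gives the stated formulas.

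For part~\ref{prop flop action general b}, again by naturality of $\fun(\rho_1^*\varobj\otimes-)\cong\rho_2^*\varobj\otimes\fun(-)$ it suffices to build the inclusions and check commutativity for $\varobj=\cO_{\P V_3}$, where $\Hom(\cA_1,\cA_0)=H^0(X_1,\cO(1,0))$ and, by part~\ref{prop flop action general a}, $\Hom(\fun(\cA_1),\fun(\cA_0))=H^0(X_2,\cO(-1,-1))$. I would take the left inclusion to be $V_2^\vee\cong H^0(\P V_2,\cO(1))\hookrightarrow H^0(X_1,\cO(1,0))$ (pulled back from $\P V_2$) and the right one to be the subspace of fibrewise-linear sections $V_2^\vee\hookrightarrow H^0(X_2,\cO(-1,-1))$, which makes sense because $X_2=\Tot(V_2\otimes\cO(-1,-1))$, so the $\cO(-1,-1)$-valued linear functions on the fibres are precisely $V_2^\vee$. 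To see the triangle commutes, restrict to $Z^\circ$: each $\Exc_i$ has codimension $2$ in the smooth $X_i$, so $H^0(X_i,\cL)\to H^0(Z^\circ,\cL|_{Z^\circ})$ is an isomorphism for the line bundles at hand, and over $Z^\circ$ the functor $\fun=\RDerived f_{2*}\LDerived f_1^*$ is transport along the isomorphisms $g_1^{-1}(Z^\circ)\xleftarrow{\ \sim\ }Q\setminus\Exc\xrightarrow{\ \sim\ }g_2^{-1}(Z^\circ)$ induced by $f_1,f_2$, hence acts on morphisms of restricted line bundles by that transport. Writing $\cL_i$ for the pullback to $Z^\circ$ of $\cO_{\P V_i}(-1)$ along $z=v_1\otimes v_2\otimes v_3\mapsto[v_i]$, the tautological section $z\mapsto z$ trivialises $\cL_1\otimes\cL_2\otimes\cL_3$, so $\cO(1,0)|_{Z^\circ}\cong\cL_2^\vee\cong\cL_3\otimes\cL_1\cong\cO(-1,-1)|_{Z^\circ}$, and one checks (from the proof of part~\ref{prop flop action general a}, via the tautological section of $\cO_Q(\Exc)$ on $Q\setminus\Exc$) that this is the identification through which $\fun$ operates. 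Tracing the two inclusions of $V_2^\vee$ through these isomorphisms, $\xi\in V_2^\vee$ goes on either side to the section $z=v_1\otimes v_2\otimes v_3\mapsto\xi(v_2)\cdot(v_3\otimes v_1)$ of $\cL_3\otimes\cL_1$; injectivity of restriction to $Z^\circ$ then gives commutativity on $X_1$ as well.

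I expect the real content, and the main obstacle, to lie in this last point of part~\ref{prop flop action general b}: fixing the canonical identification of $\cO(1,0)$ on $X_1$ with $\cO(-1,-1)$ on $X_2$ over $Z^\circ$ via the relation $\cL_1\otimes\cL_2\otimes\cL_3\cong\cO_{Z^\circ}$, and verifying it is compatible on the nose both with the two tautological copies of $V_2^\vee$ and with the isomorphisms the flop produces in part~\ref{prop flop action general a}. Everything else is bookkeeping; the one delicate ingredient there is the blow-up formula $\cO_Q(\Exc)=\cO_Q(-1,\dots,-1)$ together with the vanishing $\RDerived f_{j*}\cO_\Exc(\Exc)=0$.
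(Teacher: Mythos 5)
Your proposal is correct, but it takes a genuinely different route from the paper. The paper proves this proposition by realizing the flop $X_1\,{\dashrightarrow}\,X_2$ as a family of $3$-fold Atiyah flops over $\P V_3$ (Proposition~\ref{prop.fam4}), quoting the standard description of the Atiyah flop functor on $\cO,\cO(-1)$ together with the $W_2^\vee$-compatibility from~\cite{DS2}, and then repeating that argument fibrewise; the two inclusions of $V_2^\vee$ are exhibited as the $\Sym^0$ piece of $\RDerived\Gamma_{X_1}\,\cO(1,0)$ and the $\Sym^1$ piece of $\RDerived\Gamma_{X_2}\,\cO(-1,-1)$, which are exactly your pulled-back sections and fibrewise-linear sections, so your inclusions agree with the paper's. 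You instead work directly on the roof $Q$: part~(\ref{prop flop action general a}) via $\cO_Q(\Exc)\cong\cO_Q(-1,-1,-1)$, the vanishing $\RDerived f_{2*}\cO_\Exc(\Exc)=0$, and the projection formula (the identity $f_1^*\cO(-1,0)\cong f_2^*\cO(1,1)\otimes\cO_Q(\Exc)$ does check out: degrees $(0,-1,0)=(1,0,1)+(-1,-1,-1)$ on $\P V_1\times\P V_2\times\P V_3$); and part~(\ref{prop flop action general b}) by restricting to $Z^\circ=Z\setminus\{0\}$, where $\fun$ acts by transport along $g_{21}$ (the paper's Proposition~\ref{prop.intertwine}), and concluding by injectivity of restriction to the codimension-$2$ complement. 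The step you flag as the main obstacle does work exactly as your parenthetical suggests: the part~(\ref{prop flop action general a}) isomorphism is assembled from the divisor identity on $Q$ and the map $\cO_Q\to\cO_Q(\Exc)$ given by the canonical section, whose restriction to $Q\setminus\Exc\cong Z^\circ$ is the tautological trivialization $z\mapsto z$ of $\cL_1\otimes\cL_2\otimes\cL_3$, and both copies of $V_2^\vee$ then restrict to the section $z=v_1\otimes v_2\otimes v_3\mapsto\xi(v_2)\,(v_3\otimes v_1)$, as you compute. As for what each approach buys: yours is more self-contained (no appeal to the $3$-fold literature) and makes the identification over $Z^\circ$ explicit, while the paper's family formulation is chosen deliberately so that the identical relative argument can be rerun over the base $B=\P V_4\times\dots\times\P V_{n-1}$ in Section~\ref{sect.higher}; note also that your open-locus/normality trick is precisely the mechanism the paper reserves for the proof of Theorem~\ref{thm.keynatiso} itself, so your proof in effect moves that idea one step earlier.
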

\begin{proof} The $4$-fold $X_1$ is isomorphic to the family  of $3$-folds $X^{(3)}_1(\pi_1, \pi_2)$ over $\P V_3$ by Proposition~\ref{prop.fam4}. Under this isomorphism $\cA_1,\cA_0 \in \D(X_1)$ go to
\begin{equation}\label{eq.flop_inter1}  \rho_1^* ( \varobj \otimes \cO(1) ) \otimes \cO_{\P \pi_2}(-1) \quad\text{and}\quad \rho_1^* \varobj \end{equation} where we reuse $\rho_1$ for the projection from $X^{(3)}_1(\pi_1, \pi_2)$ to $ \P V_3$. The twist $\otimes \cO(1)$ appearing here is dual to the twist in the definition of $\pi_2$. Then the flop 
\begin{equation}\label{eqn.familyflop} X^{(3)}_1(\pi_1, \pi_2) \birational X^{(3)}_2(\pi_1, \pi_2)\end{equation} is a family over $\P V_3$ of copies of the $3$-fold Atiyah flop $Y_1  \birational Y_2$  with
\begin{align*}
\lab{Y_1} & W_1 \otimes \cO(-1)  \vecto \P W_2 \\
\lab{Y_2} & W_2 \otimes \cO(-1) \vecto \P W_1
\end{align*} 
where $\dim W_i = 2$. Write $\fun[G]\colon \D(Y_1)  \to \D(Y_2)$ for the flop functor.

The following description of the effect of $\fun[G]$ is obtained by standard arguments, see for instance~\cite[Proposition~1]{DS2}. Letting
\[\cB_1 = \cO(-1) \qquad \cB_0 = \cO\] where $\cO(k)$ denotes a bundle on $Y_i$ obtained by pullback from $\P W_i$, we have
\[\fun[G](\cB_1) = \cO(+1) \qquad \fun[G](\cB_0) = \cO\makebox[0pt]{ .}\]
Furthermore we have a commutative triangle
\begin{center}
\begin{tikzpicture}[scale=0.8,xscale=1.3]
\node (1) at (-1,0) {$\Hom(\cB_1,\cB_0)$};
\node (2) at (1,0)  {$\Hom(\fun[G](\cB_1),\fun[G](\cB_0))$};
\node (1b) at (-0.05,-1) {$W_2^\vee$};
\draw[->] (1) to node[above]{\scriptsize $\fun[G]$} (2);
\draw[left hook->] (1b) to (1);
\draw[right hook->] (1b) to (2);
\end{tikzpicture}
\end{center}
where the inclusions are given by observing the following.
\begin{align*}
\Hom(\cB_1,\cB_0) \cong \RDerived\Gamma_{Y_1} \,\cO(+1) & \cong \RDerived\Gamma_{\P W_2} \big( \cO(+1) \otimes \Sym^\bullet ( W_1^\vee \otimes \cO(1)) \big) \\
\Hom(\fun[G](\cB_1),\fun[G](\cB_0)) \cong \RDerived\Gamma_{Y_2} \,\cO(-1) & \cong \RDerived\Gamma_{\P W_1} \big( \cO(-1) \otimes \Sym^\bullet ( W_2^\vee \otimes \cO(1)) \big)
\end{align*}
These have no higher cohomology by standard vanishing on $\P^1$, and after taking $0^{\text{th}}$ cohomology, the pieces coming from $\Sym^0$ and $\Sym^1$ respectively are both $W_2^\vee$.

Now repeating the standard arguments in a family, we find that the flop functor for the flop \eqref{eqn.familyflop} applied to the objects~\eqref{eq.flop_inter1} gives
\begin{equation}\label{eq.flop_inter2} \rho_2^* ( \varobj \otimes \cO(1) ) \otimes \cO_{\P \pi_1}(+1) \quad\text{and}\quad \rho_2^* \varobj  \end{equation}
where we reuse $\rho_2$ for the projection  from $X^{(3)}_2(\pi_1, \pi_2)$ to $ \P V_3$. Under the isomorphism~\eqref{eqn.family2} it is easily seen that the objects~\eqref{eq.flop_inter2} go to the required objects on~$X_2$.

For the last part, we also repeat the argument for the Atiyah flop in a family. The inclusions in the statement are given by observing the following.
\begin{align*}
\Hom(\cA_1,\cA_0) & \cong \RDerived\Gamma_{X_1}\, \cO(1,0) \\
& \cong  \RDerived\Gamma_{\P V_2 \times \P V_3} \big( \cO(1,0) \otimes \Sym^\bullet ( V_1^\vee \otimes \cO(1,1)) \big) \\[\formulaSpace]
\Hom(\fun(\cA_1),\fun(\cA_0)) & \cong \RDerived\Gamma_{X_2}\, \cO(-1,-1) \\
& \cong \RDerived\Gamma_{\P V_3 \times \P V_1} \big( \cO(-1,-1) \otimes \Sym^\bullet ( V_2^\vee \otimes \cO(1,1)) \big)
\end{align*}
These have no higher cohomology by standard vanishing on $\P^1 \times \P^1$, and after taking $0^{\text{th}}$ cohomology, the pieces coming from $\Sym^0$ and $\Sym^1$ respectively are both $V_2^\vee$.
\end{proof}

The following describes the action of the flop functors on certain line bundles, again using the conventions of Notation~\ref{sect bun notation}.

\begin{prop}\label{prop flop action} For $n=4$, considering the flop functors
\begin{center}
\begin{tikzpicture}[scale=\trianglepicscaleB]
\trianglepic[3]
\end{tikzpicture}
\end{center}
between
\begin{align*} 
\lab{X_1} & V_1 \otimes \cO(-1,-1)  \vecto \P V_2 \times \P V_3 \\
\lab{X_2} & V_2 \otimes \cO(-1,-1)  \vecto \P V_3 \times \P V_1 \\
\lab{X_3} & V_3 \otimes \cO(-1,-1)  \vecto \P V_1 \times \P V_2
\end{align*}
we have the following.
\begin{align*} 
\lab{\fun_{21}} \cO(0,b) & \mapsto \cO(b,0) \\
\cO(-1,b) & \mapsto \cO(b+1,1) \\[\formulaSpace]
\lab{\fun_{31}} \cO(a,0) & \mapsto \cO(0,a) \\
\cO(a,-1) & \mapsto \cO(1,a+1)
\end{align*}
\end{prop}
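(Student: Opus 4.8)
The plan is to read both displays straight off Proposition~\ref{prop flop action general}, whose functor $\fun$ is precisely $\fun_{21}$, by specializing the parameter object $\varobj$ and then exploiting the permutation symmetry of the whole construction to transfer the statement to $\fun_{31}$. Part~\ref{prop flop action general b} of that proposition will not be needed here.

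\textbf{The functor $\fun_{21}$.} First I would observe that the projection $\rho_1\colon X_1\to\P V_3$ of Proposition~\ref{prop flop action general} is the composite $X_1\to\P V_2\times\P V_3\to\P V_3$, so that for the choice $\varobj=\cO_{\P V_3}(b)$ we get $\rho_1^*\varobj\cong\cO(0,b)$ on $X_1$; hence $\cA_0=\cO(0,b)$ and $\cA_1=\rho_1^*\varobj\otimes\cO(-1,0)=\cO(-1,b)$ in the notation of Notation~\ref{sect bun notation}. Likewise $\rho_2\colon X_2\to\P V_3$ is the composite $X_2\to\P V_3\times\P V_1\to\P V_3$, so $\rho_2^*\varobj\cong\cO(b,0)$ on $X_2$. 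Part~\ref{prop flop action general a} then reads exactly $\fun_{21}(\cO(0,b))\cong\cO(b,0)$ and $\fun_{21}(\cO(-1,b))\cong\cO(b,0)\otimes\cO(1,1)=\cO(b+1,1)$, which is the first display.

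\textbf{The functor $\fun_{31}$.} For the second display I would use that $Z$, together with its resolutions $X_i$, its birational roof $Q$, and the blowup maps $f_i$, is symmetric under the transposition $\sigma$ exchanging $V_2$ and $V_3$ and fixing $V_1$. Concretely, $\sigma$ carries $X_1$ and $Q$ to themselves up to interchanging their two factors coming from $\P V_2$ and $\P V_3$, and carries $X_2$ to $X_3$; on line bundles it sends $\cO(a,b)$ on $X_1$ to $\cO(b,a)$ on $X_1$ and $\cO(a,b)$ on $X_2$ to $\cO(b,a)$ on $X_3$. Since $\sigma$ intertwines the maps $f_i$ it conjugates $\fun_{21}$ to $\fun_{31}$, and applying it to the first display yields $\fun_{31}(\cO(a,0))\cong\cO(0,a)$ and $\fun_{31}(\cO(a,-1))\cong\cO(1,a+1)$, as claimed.

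\textbf{Expected obstacles.} The only genuinely non-formal points are (i) that the identifications $\rho_1^*\cO(b)\cong\cO(0,b)$ on $X_1$ and $\rho_2^*\cO(b)\cong\cO(b,0)$ on $X_2$ use the correct factor of each base — a matter of tracking which $\P^1$ carries which $V_j$ under the conventions of Section~\ref{sect.res} — and (ii) checking that $\sigma$ is an honest symmetry of the full birational roof diagram, so that it carries $\fun_{21}$ to $\fun_{31}$ rather than to some other functor or an inverse. Both are routine once the conventions of Section~\ref{sect.res} are at hand. Alternatively, one can avoid the symmetry argument by rerunning Propositions~\ref{prop.fam4} and~\ref{prop flop action general} with $X_1$ presented as a family over $\P V_2$ instead of $\P V_3$; the argument is then word-for-word the same, with bookkeeping of the various twists the only mildly delicate ingredient.
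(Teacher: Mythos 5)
Your proposal is correct and follows essentially the same route as the paper: the $\fun_{21}$ statement is read off from Proposition~\ref{prop flop action general}(\ref{prop flop action general a}) via the identifications $\cO(0,b)\cong\rho_1^*\cO(b)$, $\cO(-1,b)\cong\rho_1^*\cO(b)\otimes\cO(-1,0)$ on $X_1$ and $\cO(b,0)\cong\rho_2^*\cO(b)$, $\cO(b+1,1)\cong\rho_2^*\cO(b)\otimes\cO(1,1)$ on $X_2$, and the $\fun_{31}$ statement is then obtained by symmetry, exactly as in the paper (your explicit check that the transposition $V_2\leftrightarrow V_3$ intertwines the roof diagram and conjugates $\fun_{21}$ to $\fun_{31}$ just spells out the paper's one-line appeal to symmetry).
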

\begin{rem}\label{rem cycle} From this proposition we may deduce the action of all $\fun_{ji}$ by cyclic symmetry. Because of our conventions, the statements for $\fun_{32}$ and $\fun_{13}$ read the same as for  $\fun_{21}$, and so on.
\end{rem}
\begin{proof}
The claim for $\fun_{21}$ is from Proposition~\ref{prop flop action general} using isomorphisms as follows, and the claim for $\fun_{31}$ is obtained by symmetry.
\[
\cO(0,b) \cong \rho_1^* \cO(b) \quad\text{and}\quad
\cO(-1,b) \cong \rho_1^* \cO(b) \otimes \cO(-1,0)\quad\text{on~$X_1$}
\]
\[
\cO(b,0) \cong \rho_2^* \cO(b) \quad\text{and}\quad \cO(b+1,1) \cong \rho_2^* \cO(b) \otimes \cO(1,1)\quad\text{on~$X_2$}\qedhere
\]
\end{proof}

\begin{rem} Propositions~\ref{prop flop action general} and~\ref{prop flop action} may also be obtained directly by adapting an argument of Kawamata~\cite[Proposition~3.1]{Kaw}, after calculating the canonical bundles of the roofs $Q$.
\end{rem}

The following straightforward observation about the flop functors~$\fun_{ji}$ will be used in the proof of Theorem~\ref{thm.keynatiso}.
  
\begin{prop}\label{prop.intertwine} Writing restriction functors 
\[ \res_i \colon \D(X_i) \to \D(X_i - \Exc_i) \]
we have intertwinements
\begin{align*}
\res_j \comp \fun_{ji} & \cong g_{ji*} \comp \res_i
\end{align*}
where we write $g_{ji}\colon X_i - \Exc_i \to X_j - \Exc_j$ for the isomorphism induced by the birational map $\phi_{ji}$.
\end{prop}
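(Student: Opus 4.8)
The plan is to restrict the whole roof $X_i \xleftarrow{f_i} Q \xrightarrow{f_j} X_j$ to the open locus over which all the blowup maps are isomorphisms, and then apply flat base change together with pseudofunctoriality of derived pullback.

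\medskip

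\noindent\emph{Set-up.} Let $V\subseteq Q$ be the complement of the zero section of $Q\to\P V_1\times\dots\times\P V_{n-1}$. The key point is that $V=f_i^{-1}(X_i-\Exc_i)$ for \emph{every} $i$: indeed $f_i$ is the blowup of $X_i$ along its zero section $\Exc_i$, so its exceptional divisor is $\P(N_{\Exc_i/X_i})$, and the construction of Section~\ref{sect flop fun} identifies this with the zero section of $Q$, independently of $i$. (Alternatively this is forced by the very existence of the isomorphisms $g_{ji}$: that $f_j\comp f_i^{-1}$ restricts to an isomorphism $X_i-\Exc_i\to X_j-\Exc_j$ gives $f_i^{-1}(X_i-\Exc_i)\subseteq f_j^{-1}(X_j-\Exc_j)$, and symmetrically.) Write $u_i\colon X_i-\Exc_i\into X_i$ and $\iota\colon V\into Q$ for the open immersions, and $f_{i,V}\colon V\to X_i-\Exc_i$ for the restriction of $f_i$, which is an isomorphism; by construction $g_{ji}=f_{j,V}\comp f_{i,V}^{-1}$. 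Then for every $i$ the square
\[
\begin{array}{ccc}
V & \into & Q \\
\downarrow & & \downarrow \\
X_i-\Exc_i & \into & X_i
\end{array}
\]
with left vertical map $f_{i,V}$ and right vertical map $f_i$ is Cartesian.

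\medskip

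\noindent\emph{Computation.} Compute $\res_j\comp\fun_{ji}=u_j^*\comp\RDerived f_{j*}\comp\LDerived f_i^*$. Flat base change along the open immersion $u_j$ in the $j$-square gives $u_j^*\comp\RDerived f_{j*}\cong\RDerived(f_{j,V})_*\comp\iota^*$, and since $f_{j,V}$ is an isomorphism this is $(f_{j,V})_*\comp\iota^*$. Pseudofunctoriality of derived pullback applied to the equality $f_i\comp\iota=u_i\comp f_{i,V}$ of maps $V\to X_i$ gives $\iota^*\comp\LDerived f_i^*\cong\LDerived f_{i,V}^*\comp u_i^*=(f_{i,V})^*\comp\res_i$. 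Combining, $\res_j\comp\fun_{ji}\cong(f_{j,V})_*\comp(f_{i,V})^*\comp\res_i$. Finally, for the isomorphism $f_{j,V}$ we have $(f_{j,V})_*\cong((f_{j,V})^{-1})^*$, so $(f_{j,V})_*\comp(f_{i,V})^*\cong(f_{i,V}\comp f_{j,V}^{-1})^*=(g_{ji}^{-1})^*\cong g_{ji*}$, which is the claim.

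\medskip

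\noindent\emph{Main obstacle.} The only step needing genuine attention is the observation that $f_i^{-1}(\Exc_i)$ is one and the same open subset of $Q$ for all $i$; once that is in hand the rest is formal. This is most cleanly recorded from the explicit blowup description of $Q$, or simply noted to be built into the definition of the $g_{ji}$. No derived subtleties arise, since $\iota$, $u_i$, $u_j$ are open immersions (hence flat) and $f_{i,V}$, $f_{j,V}$ are isomorphisms, so all the relevant $\LDerived$'s and $\RDerived$'s are underived; the same argument applies verbatim for all $n\geq 3$.
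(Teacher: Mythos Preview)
Your argument is correct and is precisely the flat base change argument the paper has in mind; you have simply spelled out the Cartesian squares and the identification $V=f_i^{-1}(X_i-\Exc_i)$ that the paper leaves implicit. One small slip: in your final paragraph you write ``$f_i^{-1}(\Exc_i)$ is one and the same open subset'' where you mean $f_i^{-1}(X_i-\Exc_i)$ (or equivalently that the closed exceptional divisors $f_i^{-1}(\Exc_i)$ all coincide with the zero section of $Q$).
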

\begin{proof}Each $\res_i$ is by definition a pullback along an open immersion, so this follows using flat base~change.
\end{proof}

\section{Dimension~$4$}\label{sec.4}

Here I prove Theorem~\ref{mainthm.equiv} for dimension~$4$. Though the argument is standard, I~write it in detail, anticipating a family version in Section~\ref{sect.higher}. After the proof, some discussion of the method is given in Section~\ref{sec.discuss}.

\subsection{Proof} Recall that $\Exc \cong \P V_1 \times \P V_2$ is the exceptional locus of $X_3$.

\begin{prop}\label{prop.sph} The object $\cE=\cO_\Exc(0,-1)$
is spherical in $\D(X_3)$.
\end{prop}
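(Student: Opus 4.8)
The plan is to verify directly that $\cE = \cO_\Exc(0,-1)$ satisfies the spherical object conditions in the Calabi--Yau fourfold $X_3$: namely (i) $\cE \otimes \omega_{X_3} \cong \cE$, which is immediate since $X_3$ is Calabi--Yau by Proposition~\ref{prop.cy}, so $\omega_{X_3} \cong \cO_{X_3}$; and (ii) $\RHom_{X_3}(\cE,\cE) \cong H^*(S^4,\C)$, i.e.\ $\Ext^0 = \Ext^4 = \C$ and $\Ext^1 = \Ext^2 = \Ext^3 = 0$. By Serre duality on the fourfold, $\Ext^i(\cE,\cE) \cong \Ext^{4-i}(\cE,\cE)^\vee$, so it suffices to compute $\Ext^0, \Ext^1, \Ext^2$ and show they are $\C, 0, 0$ respectively.

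The main computation is therefore a local Koszul/Ext calculation for the torsion sheaf $\cE$ supported on $\Exc \cong \P V_1 \times \P V_2$. First I would resolve $\cO_\Exc$ as an $\cO_{X_3}$-module using the Koszul complex of the section of $V_3 \otimes \cO(-1,-1)$ cutting out the zero section $\Exc \subset X_3$; this gives $\cO_\Exc \simeq \Lambda^\bullet\big( (V_3 \otimes \cO(-1,-1))^\vee \big)$, i.e.\ terms $\Lambda^p(V_3^\vee) \otimes \cO(p,p)$ for $p = 0,1,2$. Tensoring by $\cO_\Exc(0,-1)$ and applying $\sHom(-,\cE)$, then pushing to $\Exc$, reduces everything to a spectral sequence whose $E_1$-page involves $H^*\big(\P V_1 \times \P V_2,\ \Lambda^p V_3 \otimes \Lambda^q N^\vee \otimes \cO(-p, -p)\big)$ type terms, where $N$ is the normal bundle. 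Because $\dim V_3 = 2$, the range of $p$ is small ($0 \le p \le 2$), and all the cohomology groups on $\P^1 \times \P^1$ that appear can be evaluated by Künneth together with the standard vanishing $H^{>0}(\P^1,\cO) = H^*(\P^1,\cO(-1)) = 0$. Assembling the spectral sequence should collapse the total Ext into the claimed $H^*(S^4)$.

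I expect the main obstacle to be bookkeeping: correctly identifying the self-intersection/normal bundle contributions (the $\cO(p,p)$ twists coming from the Koszul differential, interacting with the extra $\cO(0,-1)$ twist) and tracking the bidegrees so that exactly one copy of $\C$ survives in degrees $0$ and $4$ with nothing in between, rather than a sign or normalization slip producing spurious $\Ext^1$ or $\Ext^2$ classes. A cleaner alternative, which I would pursue if the spectral sequence is messy, is to use the family structure from Proposition~\ref{prop.fam4}: realize $X_3$ as a family of threefolds over $\P V_1$ (or $\P V_2$), and note that $\cE = \cO_\Exc(0,-1)$ restricts on each fiber threefold to $\cO_{\P^1}(-1)$, which is the familiar spherical object for the Atiyah flop; then a relative-to-absolute argument (a Leray spectral sequence over $\P V_1$, using that the extra twist $\cO(0,-1)$ is precisely the pullback of $\cO_{\P V_1}(-1)$) upgrades fiberwise sphericity on the threefold to sphericity on the fourfold, the extra cohomological degree $4 = 3+1$ arising from $H^1(\P V_1, \cO(-1) \otimes \text{(something)})$ paired against the fiberwise $\Ext^3$. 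Either route should be routine once the twists are pinned down.
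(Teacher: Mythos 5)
Your proposal is correct and matches the paper's approach: the paper likewise combines the Calabi--Yau property of $X_3$ (Proposition~\ref{prop.cy}) with a standard spectral sequence calculation of $\Ext^*_{X_3}(\cE,\cE)$ governed by the normal bundle $\cN\cong\cO_{\P^1\times\P^1}(-1,-1)^{\oplus 2}$, $\wedge^2\cN\cong\cO_{\P^1\times\P^1}(-2,-2)$, citing \cite[Examples~8.10(v)]{Huy}. The only slip is in your description of the $E_1$-page: the relevant terms are $H^*(\Exc,\wedge^q\cN)$ --- equivalently, via your Koszul resolution, $H^*(\Exc,\Lambda^p V_3\otimes\cO(-p,-p))$ --- not groups involving both $\Lambda^p V_3$ and exterior powers of $\cN^\vee$ simultaneously, but this is exactly the bookkeeping you flagged and does not affect the outcome.
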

\begin{proof} Noting that $X_3$ is Calabi--Yau by Proposition~\ref{prop.cy}, we require 
\[ \Hom(\cE,\cE) \cong \operatorname{H}^*(S^4) \]
which follows by a standard spectral sequence calculation, using that for the normal bundle $\cN$ of $\Exc$ we have
\begin{align*}
\cN & \cong \cO_{\P^1\times \P^1}(-1,-1)^{\oplus 2} \\
\wedge^2 \cN & \cong \cO_{\P^1\times \P^1}(-2,-2) .
\end{align*}
 See for instance~\cite[Examples~8.10(v)]{Huy} for a similar calculation.
\end{proof}

\begin{defn}\label{defn.twist} Take the spherical twist autoequivalence on $X_3$
\[ \Tw_2 = \Tw \big( \cO_\Exc(0,-1) \big) \]
where the subscript~$2$ is used because pullback of $\cO_{\P V_2}(-1)$ gives $\cO_\Exc(0,-1)$. The autoequivalence $\Tw (\cE)$ is defined so that there is a triangle of Fourier--Mukai functors
\[ \Tw (\cE) = \Cone\big(  \RHom_{X_3}(\cE,-)\Lotimes \cE  \to \id \big)\makebox[0pt]{\,,} \]
see \cite{ST} or~\cite[Section~8.1]{Huy}.
\end{defn}

Similarly, we have an autoequivalence on $X_2$ \[ \Tw_3 = \Tw \big( \cO_\Exc(-1,0) \big) \]
where the subscript~$3$ is used because pullback of $\cO(-1)_{\P V_3}$ gives $\cO_\Exc(-1,0)$.

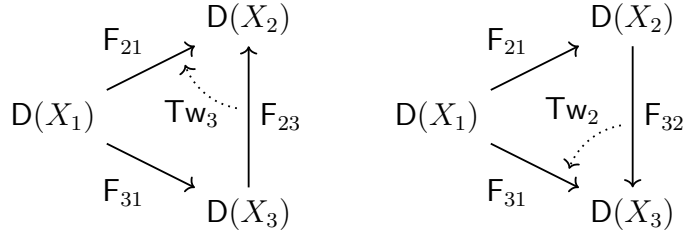
\begin{figure}[tbh]
\begin{center}
\begin{tikzpicture}[scale=\trianglepicscaleB]
\trianglepic[5]
\end{tikzpicture}
\qquad
\begin{tikzpicture}[scale=\trianglepicscaleB]
\trianglepic[4]
\end{tikzpicture}
\end{center}
\caption{Relations between functors from Theorem~\ref{thm.keynatiso}.}
\label{functors.rel}
\end{figure}

\begin{thm}[Theorem~\ref{mainthm.equiv}]\label{thm.keynatiso}  For $n=4$ there are natural isomorphisms
\begin{align*}
\fun_{21} & \cong \Tw_3 \comp \fun_{23} \comp \fun_{31}  \\
\fun_{31} & \cong \Tw_2 \comp \fun_{32} \comp \fun_{21}
\end{align*}
of functors from $\D(X_1)$ to $\D(X_2)$ and $\D(X_3)$ respectively, illustrated in Figure~\ref{functors.rel}.
\end{thm}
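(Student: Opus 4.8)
The plan is to prove the second isomorphism $\fun_{31} \cong \Tw_2 \comp \fun_{32} \comp \fun_{21}$; the first follows by the cyclic symmetry of $\{V_1,V_2,V_3\}$ (cf.\ Remark~\ref{rem cycle}). The strategy is the classical one for Atiyah-flop relations, adapted to carry the extra $\P V_3$ (resp.\ $\P V_1$, $\P V_2$) direction: exhibit on $\D(X_1)$ a small set of generating objects, compute how both sides of the claimed isomorphism act on them (including on morphism spaces), and conclude that the two functors agree. Concretely, I would use the line bundles $\cO(a,b)$ on $X_1$, or better the two objects $\cA_1 = \cO(-1,0)$ and $\cA_0 = \cO$ together with the projection $\rho_1\colon X_1 \to \P V_3$ and arbitrary $\varobj \in \D(\P V_3)$ pulled back and tensored in, exactly as in Proposition~\ref{prop flop action general}. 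Such objects classically split-generate $\D(X_1)$ (the pair $\{\cO(-1,0),\cO\}$ is a relative tilting bundle over $\P V_3$, since restricted to each Atiyah-flop fibre it is the Bondal--Orlov/Van den Bergh tilting bundle), so a natural transformation between exact functors that is an isomorphism on these objects and compatible with the relevant morphisms is an isomorphism.

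The key steps, in order: (i) Compute the right-hand side on $\cA_1,\cA_0$. By Proposition~\ref{prop flop action} (or Proposition~\ref{prop flop action general}), $\fun_{21}$ sends $\cO(0,b)\mapsto\cO(b,0)$ and $\cO(-1,b)\mapsto\cO(b+1,1)$ on $X_2$; composing with $\fun_{32}$ (which, by Remark~\ref{rem cycle}, acts on $X_2$-bundles by the same formulas cyclically permuted) transports these to $X_3$. Then apply $\Tw_2 = \Tw(\cO_\Exc(0,-1))$ using the defining triangle $\RHom_{X_3}(\cE,-)\Lotimes\cE \to \id \to \Tw(\cE)$: for the image of $\cA_0$ the $\RHom$ will vanish, so $\Tw_2$ acts as the identity, whereas for the image of $\cA_1$ the $\RHom$ against $\cE$ is one-dimensional and the cone produces exactly the twist of $\cO(a,b)$ by $\cO_\Exc$ needed to match $\fun_{31}(\cA_1)$. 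This mirrors the $n=3$ computation $\id \cong \Tw\comp\fun_2\comp\fun_1$ recalled in Section~\ref{sect.Atiyah}. (ii) Compute the left-hand side $\fun_{31}$ on the same objects, directly from Proposition~\ref{prop flop action} / Proposition~\ref{prop flop action general}. (iii) Check the two computations agree not just on objects but on the morphism space $\Hom(\cA_1,\cA_0)$: here the commuting triangles with $V_2^\vee$ (resp.\ $V_3^\vee$) in Proposition~\ref{prop flop action general}\eqref{prop flop action general b} are the crucial bookkeeping, showing that the composite $\Tw_2\comp\fun_{32}\comp\fun_{21}$ and $\fun_{31}$ induce the same identification of $\Hom$-spaces, so that a natural isomorphism assembled pointwise is genuinely natural. (iv) Away from the exceptional loci everything is an honest isomorphism of varieties, so the restriction functors intertwine both sides identically (Proposition~\ref{prop.intertwine}); combined with agreement on the generating objects over each point of $\P V_3$, this pins down the natural isomorphism globally.

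The main obstacle I anticipate is step (iii): keeping track of the morphism-space identifications through the threefold composite $\Tw_2\comp\fun_{32}\comp\fun_{21}$ and matching them with the single functor $\fun_{31}$. Each individual flop functor twists and relabels $\Hom$-spaces via the $V_i^\vee$-triangles, and the spherical twist $\Tw_2$ further modifies a morphism space by the cone construction; verifying that the net effect is a \emph{canonical} isomorphism compatible with composition (rather than merely an abstract isomorphism of the two functors' values) is the delicate part. A secondary, more technical nuisance is that $\fun_{32}$ is stated on $X_2$ in the cyclically permuted convention, so one must carefully translate the bundle indices $\cO(a,b)$ between the three charts $\P V_2\times\P V_3$, $\P V_3\times\P V_1$, $\P V_1\times\P V_2$; Remark~\ref{rem cycle} handles the combinatorics but it is easy to drop a sign or transpose a factor. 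Once both the object-level and morphism-level computations are in hand, assembling the natural isomorphism and propagating it over the $\P V_3$ base via Proposition~\ref{prop.intertwine} is routine.
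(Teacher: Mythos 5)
Your plan follows the paper's general template (compute both sides on a generating collection via Propositions~\ref{prop flop action general}--\ref{prop flop action}, then upgrade to a functor isomorphism using the restriction statement of Proposition~\ref{prop.intertwine}), but it has a gap at each of the two decisive points. First, the object-level computation: after $\fun_{21}\colon\cO(-1,0)\mapsto\cO(1,1)$, you propose to "transport" $\cO(1,1)$ to $X_3$ by the cyclically permuted formulas, but Proposition~\ref{prop flop action} (even with Remark~\ref{rem cycle}) only covers bundles of the form $\cO(0,b)$ and $\cO(-1,b)$, so it says nothing about $\fun_{32}(\cO(1,1))$ --- and indeed that object is not a line bundle. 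Identifying it is the real content of the proof: one resolves $\cO(1,1)$ on $X_2$ by the pulled-back Euler sequence $\wedge^2V_3^\vee\otimes\cO(-1,1)\to V_3^\vee\otimes\cO(0,1)$, transports the differential through $\fun_{32}$ using the $V_3^\vee$-summand statement of Proposition~\ref{prop flop action general}(\ref{prop flop action general b}), and then recognizes the resulting cone, via the Koszul resolution of $\Exc\subset X_3$, as $\Cone\big(\cO(0,-1)\to\cO_\Exc(0,-1)\big)[-1]\cong\Tw_2^{-1}\cO(0,-1)$. So the $V^\vee$-bookkeeping is needed already at the level of objects, not only in your step (iii); as written, your claim that applying $\Tw_2$ "produces exactly the twist of $\cO(a,b)$ by $\cO_\Exc$ needed" is unsupported because the input object has not been computed.

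Second, the passage from agreement on generators to a natural isomorphism of functors: you rightly flag this as the delicate step, but you give no mechanism, and tracking $\Hom$-space identifications through the triple composite does not by itself produce a natural transformation between the two sides. Moreover, your framing over $\P V_3$ cannot carry this weight: $\fun_{21}$ is relative over $\P V_3$, but $\fun_{32}$, $\fun_{31}$ and $\Tw_2$ are not ($X_3$ has no projection to $\P V_3$; the flop from $X_2$ to $X_3$ is a family over $\P V_1$), so agreement on objects $\rho_1^*\varobj\otimes(\cO\oplus\cO(-1,0))$ does not determine the right-hand functor by any relative tilting argument over $\P V_3$. The paper's device avoids the delicate morphism-tracking entirely: take the absolute tilting bundle $\cT=\cO\oplus\cO(-1,0)\oplus\cO(0,-1)\oplus\cO(-1,-1)$, form $\Psi=\fun_{21}^{-1}\comp\fun_{32}^{-1}\comp\Tw_2^{-1}\comp\fun_{31}$, show $\Psi(\cT)\cong\cT$ (the three summands other than $\cO(-1,0)$ map to bundles killed by $\RHom(-,\cO_\Exc(0,-1))$, so $\Tw_2^{-1}$ leaves them alone), and then prove that the induced automorphism of $\End(\cT)$ is the identity by restricting to $X_1-\Exc$ via Proposition~\ref{prop.intertwine} together with triviality of the twist off $\Exc$, using that $\cT$ is a bundle, $X_1$ is normal and $\Exc$ has codimension $2$; the tilting equivalence $\D(\End(\cT))\cong\D(X_1)$ then gives $\Psi\cong\id$. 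Incorporating these two ingredients --- the Euler/Koszul computation of $\fun_{32}(\cO(1,1))$ and the $\Psi$-plus-restriction-plus-tilting argument --- is what your proposal is missing.
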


\begin{proof}
I prove the second statement, as the first follows by the same argument. Using the results of Section~\ref{section.flop}, we evaluate the functors in the proposed isomorphism on the following bundle $\cT$ on $X_1$. Note that $\cT$ is tilting, by standard methods using the Beilinson tilting bundles $\cO \oplus \cO(-1)$ on $\P V_2$ and $\P V_3$.
\[ \cT = \cO \soplus \cO(-1,0) \soplus \cO(0,-1) \soplus \cO(-1,-1) \]
By applying functors to the summands using Proposition~\ref{prop flop action}, we find that
\[
\setlength\arraycolsep{2pt}
\begin{array}{rcccccccc}
\fun_{21} (\cT) & \cong & \cO & \oplus & \cO(1,1) & \oplus & \cO(-1,0) & \oplus & \cO(0,1) \\[\formulaSpace]
\fun_{31} (\cT) & \cong & \cO & \oplus & \cO(0,-1) & \oplus & \cO(1,1) & \oplus & \cO(1,0) 
\end{array}
\]
and furthermore by Remark~\ref{rem cycle} that
\[
\fun_{32}\fun_{21} (\cT) \scong \cO  \soplus  \calcobj  \soplus  \cO(1,1)  \soplus  \cO(1,0)\makebox[0pt]{ ,}
\]
where we let
\begin{equation}\label{eqn cE}
\calcobj = \fun_{32} (\cO(1,1)) \cong \fun_{32}\fun_{21} (\cO(-1,0))\makebox[0pt]{ .}
\end{equation}
This $\calcobj$ may be described as follows.
\begin{align*}
\calcobj & \cong \fun_{32} \Cone \big( \!\wedge^2 \! V_3^\vee \otimes  \cO(-1,1) \overset{\psi_2\,}\longto V_3^\vee \otimes \cO(0,1) \big) \\
& \cong \Cone \big( \!\wedge^2 \! V_3^\vee \otimes  \cO(2,1) \overset{\psi_3\,}\longto V_3^\vee \otimes \cO(1,0) \big)
\end{align*}
The first line uses the pullback of the Euler short exact sequence from $\P V_3 \cong \P^1$ to~$X_2$. The second line follows by Proposition~\ref{prop flop action}, where we let $\psi_3 = \fun_{32} (\psi_2)$.

To describe $\psi_3$ we apply Proposition~\ref{prop flop action general}(\ref{prop flop action general b}) with $\cB=\cO(1)$ on $\P V_1$ and cycling the indices as in Remark~\ref{rem cycle}. As in this proposition, $\Hom (\cO(-1,1),\cO(0,1))$ on $X_2$ has a canonical summand $V_3^\vee$. By construction, $\psi_2$ is induced by this summand. If follows from the proposition that $\psi_3$ is induced by the canonical summand~$V_3^\vee$ of $\Hom (\cO(2,1),\cO(1,0))$ on $X_3$. Using the Koszul resolution of the exceptional locus $\Exc$ on $X_3$, we therefore find the following.
 \begin{align}\label{eqn.inv_tw_sh}
\calcobj & \cong \Cone \big(  \cO(0,-1) \overset{\operatorname{res}}{\longrightarrow} \cO_\Exc(0,-1) \big)[-1] \notag  \\
& \cong \Tw_2^{-1} (\cO(0,-1)) \notag \\
& \cong \Tw_2^{-1} \fun_{31} (\cO(-1,0))
\end{align}
For the second isomorphism we use that 
\begin{equation}\label{eqn.invtwist} \Tw_2^{-1} = \Tw (\cE)^{-1} \cong \Cone\big(\id \to \RHom_{X_3}(-,\cE)^\vee \Lotimes \cE  \,\big)[-1] \end{equation}
with $\cE=\cO_\Exc(0,-1)$, where the morphism is an adjunction unit and
\begin{equation}\label{eqn.onkeysheaf} \RHom_{X_3}(\cO(0,-1),\cO_\Exc(0,-1))^\vee \cong \RDerived\Gamma_\Exc (\cO_\Exc)^\vee = \C \end{equation}
by projectivity.

I now argue that we have the following.
 \begin{equation}\label{eq.alt_iso}  \Tw_2^{-1}  \fun_{31} (\cT) \cong \fun_{32} \fun_{21} (\cT)\end{equation} 
We first take a splitting $\cT=\cU \oplus \cF$ with $\cU$ given below, and show that \eqref{eq.alt_iso} holds with $\cT$ replaced by $\cU$.
\[ \cU \seq \cO \soplus \cO(0,-1) \soplus \cO(-1,-1) \]
By the above argument $ \fun_{31} (\cU) \cong \fun_{32} \fun_{21} (\cU) $ with 
\[ \fun_{31} (\cU) \scong \cO \soplus \cO(1,1) \soplus \cO(1,0). \]
This is in the kernel of
\[ \RHom_{X_3}(-,\cO_\Exc(0,-1)), \] and therefore is unchanged up to isomorphism by applying $\Tw_2^{-1}$. We deduce that \eqref{eq.alt_iso} holds with $\cT$ replaced by $\cU$. Combining with the definition~\eqref{eqn cE} and description~\eqref{eqn.inv_tw_sh} of $\calcobj$, we conclude~\eqref{eq.alt_iso}.

Finally, we prove the claim by considering the `difference' of the two sides of the claimed isomorphism, namely proving the following natural isomorphism of functors on $\D(X_1)$.
\begin{equation}\label{eq.diff_iso}   \Psi = \fun_{21}^{-1} \comp \fun_{32}^{-1} \comp \Tw_2^{-1} \comp \fun_{31} \cong \id \end{equation} 
By~\eqref{eq.alt_iso}, $\Psi(\cT) \cong \cT$, so $\Psi$ induces a composition as follows.
\begin{equation}\label{eq.tilt_alg} \End (\cT) \overset\sim\longrightarrow \End (\Psi(\cT)) \cong \End (\cT) \end{equation}
We will show this is the identity, and thence that \eqref{eq.diff_iso} holds by the tilting equivalence. 

Recall the restriction functors $\res_i \colon \D(X_i) \to \D(X_i - \Exc)$ of Proposition~\ref{prop.intertwine}. We have \[ \res_1 \circ \, \Psi  \cong \res_1 \] by combining the intertwinements of Proposition~\ref{prop.intertwine} with 
\begin{equation}\label{eq.sph_intertw}\res_2 \comp \Tw_2^{-1} \cong \res_2\end{equation}
which follows from definition of the twist, in particular that the spherical object is supported on $\Exc$. It follows immediately that \eqref{eq.tilt_alg} intertwines via $\res_1$ with the identity on $\End (\res_1 \cT)$. Noting that $\res_1 \cT$ is just the bundle $\cT|_{X_1 - \Exc}$, that $\Exc$ is codimension~$2$, and $X_1$ is smooth thence normal, we deduce that \eqref{eq.tilt_alg} is the identity. Using that $\cT$ is tilting so that there is an equivalence $\D(\End (\cT)) \cong \D(X_1)$, we find that \eqref{eq.diff_iso} holds, and this completes the proof.\end{proof}

\begin{rem}\label{rem.monod} I briefly explain how Theorem~\ref{thm.keynatiso} above relates to calculating the derived monodromy around the triangle formed by the $\D(X_i)$, namely, to determining the composition $\fun_{13} \comp \fun_{32} \comp \fun_{21}$. Using the theorem we have the following.
\begin{align*}
\fun_{13} \comp \fun_{32} \comp \fun_{21} & \cong \fun_{13}  \comp \Tw_2^{-1}  \comp \fun_{31} \\
 & \cong (\fun_{13} \comp \fun_{31})  \comp (\fun_{31}^{-1}  \comp \Tw_2^{-1}  \comp \fun_{31})
 \end{align*}
The two brackets may then be calculated by standard techniques. The first may be expressed as a product of twists of spherical objects by, for instance, flop-flop formulas for toric variation of GIT in~\cite{HLShi}. The second may be expressed as a twist by a spherical object using the following.
\[ \Phi \comp \Tw(\cE) \comp \Phi^{-1} \cong \Tw(\Phi \cE) \quad \Longleftrightarrow \quad \Phi \comp \Tw^{-1}(\cE) \comp \Phi^{-1} \cong \Tw^{-1}(\Phi \cE) \] 
It would be interesting to carry out this calculation, for this example and more generally.
\end{rem}

\def\xpadB{0.45}
\def\xpad{-0.45}
\def\ypadB{0.13}
\def\ypad{0.15}
\def\tilt{-0.05}
\newcommand\nodeloc[4]{($#1*(0:1)+#2*(120:1)+(#3,#4)$)}

\def\tiltingB
{
\draw[lightgray] (-1.1-\xpadB,\ypadB) -- (-1.1-\xpadB,-\ypadB) -- (\xpadB,-\ypadB) -- (\xpadB,\ypadB) -- cycle;
}

\def\tilting
{
\draw[lightgray] (-\xpad+\tilt,\ypad) -- \nodeloc{0}{-1}{-\xpad-\tilt}{-\ypad} -- \nodeloc{-1}{-1}{\xpad-\tilt}{-\ypad} -- \nodeloc{-1}{0}{\xpad+\tilt}{\ypad} -- cycle;
}

\def\nodes
{
\node (Am1m1) at \nodeloc{-1}{-1}{0}{0} {$\hspace{12pt}\cO(-1,-1)$};
\node (Am10) at \nodeloc{-1}{0}{0}{0} {$\hspace{4pt}\cO(-1,0)$};
\node (A0m1) at \nodeloc{0}{-1}{0}{0} {$\cO(0,-1)$};
\node (A10) at \nodeloc{1}{0}{0}{0} {$\cO(1,0)$};
\node (A01) at \nodeloc{0}{1}{0}{0} {$\cO(0,1)$};
\node (A11) at \nodeloc{1}{1}{0}{0} {$\cO(1,1)$};
\node (A00) at (0,0) {$\cO$};
}

\def\nodesB
{
\node (Bm1) at (-1,0) {$\cO(-1)$};
\node (B0) at (0,0) {$\cO$};
\node (B1) at (1,0) {$\cO(1)$};
}

\subsection{Discussion}\label{sec.discuss} For some intuition for the above Theorem~\ref{thm.keynatiso}, I include Figure~\ref{functors4} showing the action of the flop functors on line bundles on the $X_i$. The arrows in Figure~\ref{functors4} indicate source and target for the given functor, up to isomorphism. The dotted arrows indicate the same thing, but where the target $\cO(a,b)$ should be replaced with \[\Tw_{\cO_\Exc(a,b)}^{-1} \cO(a,b) \cong \Cone\big(\cO(a,b) \to \cO_\Exc(a,b)\big)[-1] \cong \mathcal{I}_\Exc(a,b) \]
using a similar argument to the proof of Theorem~\ref{thm.keynatiso}. Note also that each flop functor takes $\cO$ to~$\cO$. Therefore in the diagrams each flop ``cycles" the bundles by $2\pi / 3$, up to spherical twists.

Inspecting Figure~\ref{functors4} we see, for instance, that $\fun_{32} \comp \fun_{21}$ and $\fun_{31}$ give the same results on all summands of the tilting bundle $\cT$ except for $\cO(-1,0)$. The spherical twist $\Tw_2$ in Theorem~\ref{thm.keynatiso} accounts precisely for the disparity.

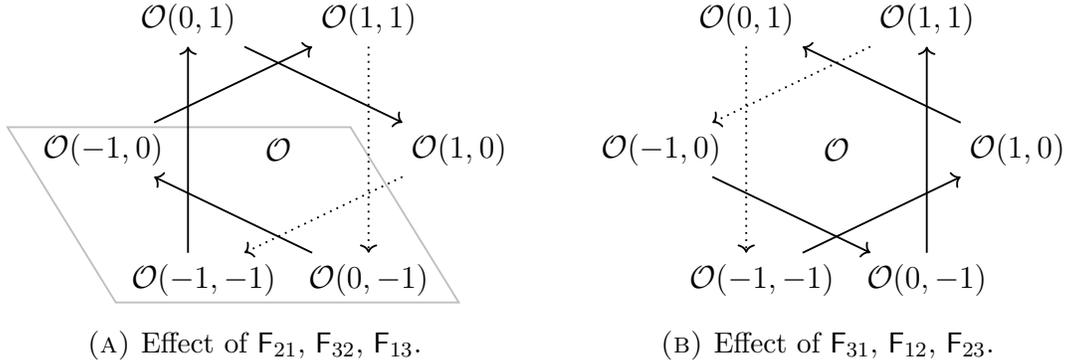
\begin{figure}[tbh]
\begin{minipage}[b]{.45\linewidth}
\begin{center}
\begin{tikzpicture}[xscale=1.2]
\tilting
\nodes
\draw[->] (Am1m1) to (A01);
\draw[->] (Am10) to (A11);
\draw[->] (A0m1) to (Am10);
\draw[->]  (A01) to (A10);
\draw[dotted,->]  (A11) to (A0m1);
\draw[dotted,->]  (A10) to (Am1m1);
\end{tikzpicture}
\end{center}
\subcaption{Effect of $\fun_{21}$, $\fun_{32}$, $\fun_{13}$.}
\end{minipage}
\hspace{0.75cm}
\begin{minipage}[b]{.45\linewidth}
\begin{center}
\begin{tikzpicture}[xscale=1.2]
\nodes
\draw[->] (Am1m1) to (A10);
\draw[->] (A0m1) to (A11);
\draw[->] (Am10) to (A0m1);
\draw[->]  (A10) to (A01);
\draw[dotted,->]  (A11) to (Am10);
\draw[dotted,->]  (A01) to (Am1m1);
\end{tikzpicture}
\end{center}
\subcaption{Effect of $\fun_{31}$, $\fun_{12}$, $\fun_{23}$.}
\end{minipage}
\caption{Flop functors for $n=4$, with summands of $\cT$ highlighted.}
\label{functors4}
\end{figure}

\begin{rem} There is a similar diagram, albeit simpler and more well-known, for the case $n=3$, given in Figure~\ref{functors3}. Here we have $X_1$ and $X_2$ related by an Atiyah flop, and functors as follows.
\begin{center}
\begin{tikzpicture}[scale=1.5]
\node (0) at (-1,0) {$\D(X_1)$};
\node (1) at (0,0)  {$\D(X_2)$};
\draw[->,transform canvas={yshift=+2.5pt}] (0) -- node[above]{$\fun_{21}$} (1);
\draw[<-,transform canvas={yshift=-2.5pt}] (0) -- node[below]{$\fun_{12}$} (1);
\end{tikzpicture}
\end{center}

Then Figure~\ref{functors3} shows the action of the flop functors. The dotted arrow indicates that the target $\cO(a)$ should be replaced with \[\Tw_{\cO_\Exc(a)}^{-1} \cO(a) \cong \Cone\big(\cO(a) \to \cO_\Exc(a)\big) [-1] \cong \mathcal{I}_\Exc(a) \]
by standard arguments. This can be used to prove the relation $\id \cong \Tw \circ  \fun_2 \comp \fun_1$ from Section~\ref{sect.Atiyah} for the Atiyah flop, by a simple analogue of the argument of Theorem~\ref{thm.keynatiso}. Namely, we check the relation on the tilting bundle $\cO(-1)\oplus\cO$, and then deduce that it holds in general.

\begin{figure}[thb] 
\begin{center}
\begin{tikzpicture}[xscale=1]
\tiltingB
\nodesB
\draw[->,bend left,looseness=1] (Bm1) to (B1);
\draw[dotted,->,bend left,looseness=1]  (B1) to (Bm1);
\end{tikzpicture}
\end{center}
\caption{Flop functors $\fun_{21}$, $\fun_{12}$ for $n=3$, with tilting bundle.}
\label{functors3}
\end{figure}
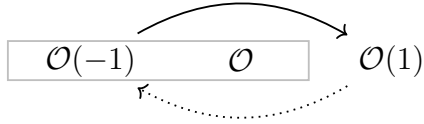

\end{rem}

\section{Higher dimension} \label{sect.higher}

Having proved the result for $n=4$ in Section~\ref{sec.4}, I explain how to extend to $n>4$ by a family construction.

\subsection{Family construction}

I realize $X_1^{(n)}$ as a family of $X_1^{(4)}$ over base 
\[ B = \P V_4 \times \dots \times \P V_{n-1}\makebox[0pt]{ .} \]

\begin{prop}\label{prop.famhigher} For $n>4$ we have for $i=1,2,3$
\[ X_i^{(n)}(V_1, \dots, V_{n-1}) \cong X_i^{(4)} (\pi_1, \pi_2, \pi_3 ) \]
where we take the following bundles.
\begin{align*} 
\lab{\pi_1} & V_1\otimes\cO \vecto B \\
\lab{\pi_2} & V_2\otimes\cO \vecto B \\
\lab{\pi_3} & V_3\otimes\cO(-1,\dots,-1) \vecto B
\end{align*}
\end{prop}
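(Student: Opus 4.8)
The plan is to follow the strategy of Proposition~\ref{prop.fam4}, realizing each $X_i^{(n)}$ fibrewise over $B = \P V_4 \times \dots \times \P V_{n-1}$. Recall that the family version of the construction of Section~\ref{sect.res} defines $X_i^{(4)}(\pi_1,\pi_2,\pi_3)$ as the total space of the rank~$2$ bundle $\pi_i \otimes \cO_{\P\pi_j}(-1)\otimes\cO_{\P\pi_k}(-1)$ over the relative product $\P\pi_j \times_B \P\pi_k$, with $\{i,j,k\}=\{1,2,3\}$ and indices taken cyclically (so for $i=1$ the base is $\P\pi_2\times_B\P\pi_3$, and likewise for $i=2,3$). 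Thus the task is to identify this total space with $X_i^{(n)}(V_1,\dots,V_{n-1})$.

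The key point is that projectivization is insensitive to twisting by a line bundle: for a line bundle $M$ on $B$ there is a canonical isomorphism $\P(V_j \otimes M) \cong \P V_j \times B$, under which the tautological subbundle $\cO_{\P(V_j\otimes M)}(-1)$ corresponds to $\cO_{\P V_j}(-1) \boxtimes M$. Applying this to the trivial bundles $\pi_1$, $\pi_2$ (so $M=\cO_B$) and to $\pi_3 = V_3 \otimes \cO(-1,\dots,-1)$ (so $M = \cO_B(-1,\dots,-1)$) gives $\P\pi_j \cong \P V_j \times B$ for $j=1,2,3$, hence
\[ \P\pi_j \times_B \P\pi_k \;\cong\; \P V_j \times \P V_k \times B , \]
which after reordering the factors is precisely the base $\P V_2 \times \dots \times \P V_{n-1}$ of $X_i^{(n)}$. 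Under these identifications $\cO_{\P\pi_1}(-1)$, $\cO_{\P\pi_2}(-1)$ become $\cO_{\P V_1}(-1)$, $\cO_{\P V_2}(-1)$ on the corresponding $\P^1$-factors, while $\cO_{\P\pi_3}(-1)$ becomes $\cO_{\P V_3}(-1)\boxtimes\cO_B(-1,\dots,-1)$.

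It then remains to multiply the three twists. For each $i$, the factor $\pi_i$ supplies the fibre $V_i$; the two twists $\cO_{\P\pi_j}(-1)$, $\cO_{\P\pi_k}(-1)$ with $j,k\neq 3$ supply degree $-1$ on the $\P^1$-factors $\P V_j$, $\P V_k$; and the twist coming from $\pi_3$ supplies degree $-1$ on $\P V_3$ together with the extra twist $\cO_B(-1,\dots,-1)$, which is exactly degree $-1$ on each of the remaining factors $\P V_4,\dots,\P V_{n-1}$. So the defining bundle of $X_i^{(4)}(\pi_1,\pi_2,\pi_3)$ is $V_i\otimes\cO(-1,\dots,-1)$ over $\P V_2 \times \dots \times \P V_{n-1}$ (up to reordering of the factors), which is by definition $X_i^{(n)}(V_1,\dots,V_{n-1})$, giving the desired isomorphism.

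I expect the main obstacle to be purely the bookkeeping: keeping track of the cyclic ordering of the indices $1,\dots,n-1$, and checking that the single twist $\cO(-1,\dots,-1)$ built into $\pi_3$ accounts for exactly the $n-4$ base directions $\P V_4,\dots,\P V_{n-1}$ for all three values of $i$ at once, while matching the conventions of Section~\ref{sect.res} (in particular $\cO(-1)$ denoting the tautological subbundle). There is no homological or geometric subtlety beyond this.
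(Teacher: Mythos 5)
Your proposal is correct and follows essentially the same route as the paper: the paper's proof just writes out the definition of $X_1^{(n)}$ over $\P V_2 \times \P V_3 \times \P V_4 \times \dots \times \P V_{n-1}$ and invokes the method of Section~\ref{sect.fam_descrip}, namely the identification $\Tot(\P(V\otimes M)) \cong \P V \times B$ under which the tautological bundle $\cO(-1)$ picks up the twist $M$, which is exactly the key point you isolate. The bookkeeping you carry out for the three values of $i$ and for the placement of the built-in twist $\cO(-1,\dots,-1)$ on $\pi_3$ is precisely what the paper leaves implicit, and it checks out.
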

\begin{proof} Recalling that we have
\[ \lab{X_1^{(n)}} V_1 \otimes \cO(-1,-1,-1,\dots,-1)  \vecto \P V_2 \times \P V_3 \times \P V_4 \times \dots \times \P V_{n-1} \]
the result follows by the methods of Section~\ref{sect.fam_descrip}.
\end{proof}

Proposition~\ref{prop.famhigher} also holds trivially for $n=4$, if we take $B$ to be a point.

\begin{rem} The isomorphisms of Proposition~\ref{prop.famhigher} express each $n$-fold as a family of $4$-folds, similarly to how the isomorphisms~\eqref{eqn.family1} and~\eqref{eqn.family2} expressed $4$-folds as a family of~$3$-folds.
\end{rem}

\subsection{Family spherical twist}\label{sect.famtwist}
I construct a family spherical twist on~$X_3$ for $n\geq 4$ generalizing the twist~$\Tw_2$ for $n=4$ from Definition~\ref{defn.twist}. Via the isomorphism of Proposition~\ref{prop.famhigher}, the exceptional locus on $X_3$ is 
\[\Exc = \P \pi_1 \underset{B}{\times} \P \pi_2 ,\]
 a bundle over $B$ with fibre $\P V_1 \times \P V_2$. Write \[\cO_\Exc(a,b) = \cO_{\P \pi_1}(c) \underset{B}{\boxtimes} \cO_{\P \pi_2}(d)\] and consider this as a torsion sheaf on $X_3$ via the inclusion. This is a relative analog of Notation~\ref{sect bun notation}. Continuing the analogy, write $\cO(a,b)$ for the pullback of $\cO_\Exc(a,b)$ to $X_3$, and use similar notation for the other $X_i$. 
 
\smallskip

I define the following functor, which I will show to be spherical. Here and in the next subsection, the letters L and R on derived functors are often dropped for the sake of readability.

\begin{defn} Take the functor
\begin{align*} 
S & = \sphobj \otimes \tau^*(-) \colon \D(B) \to \D(X_3)
\end{align*}
where $\sphobj=\cO_\Exc(0,-1)$ on $X_3$ and $\tau \colon X_3 \to B$ denotes the projection morphism. 
\end{defn}

\begin{prop}\label{prop.adj} We have adjoints to $S$ as follows.
\begin{align*} 
L &  = \tau_! \sHom ( \sphobj, - ) \cong \tau_* \sHom ( -, \sphobj )^\vee \\
R & = \tau_* \sHom ( \sphobj, - )
\end{align*}
\end{prop}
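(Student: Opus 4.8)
The plan is to identify $S = \sphobj \otimes \tau^*(-)$ as a composite $\tau^*$ followed by $(-)\otimes\sphobj$, and then assemble the adjoints from the adjoints of these two pieces, being slightly careful about which Grothendieck duality/projection-formula identity one invokes. First I would record the standard adjunctions: $\tau^*$ is left adjoint to $\tau_*$, and since $\tau$ is the projection $X_3\to B$ of a smooth proper morphism (here a bundle with fibre $\P V_1\times\P V_2$, by Proposition~\ref{prop.famhigher} — in fact the projectivization of a vector bundle), $\tau$ is proper so $\tau_! = \tau_*$, and Grothendieck--Serre duality gives $\tau^!(-) \cong \tau^*(-)\otimes\omega_\tau[\dim]$ with $\omega_\tau$ the relative dualizing bundle. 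Second, tensoring by the torsion sheaf $\sphobj=\cO_\Exc(0,-1)$: the functor $(-)\otimes\sphobj$ on $\D(X_3)$ has right adjoint $\RsHom(\sphobj,-)$ and left adjoint $\RsHom(\sphobj,-\otimes\omega_\tau^{-1})[-\dim]$ or equivalently one massages this using that $\sphobj$ is a line bundle on the smooth subvariety $\Exc$ pushed forward; concretely $\sHom(\sphobj,\sphobj^\vee\otimes(\text{twist})) $ lands back on $\Exc$, and the self-duality of $\Exc$'s structure sheaf up to a shift and a twist by $\wedge^2\cN$ (computed in Proposition~\ref{prop.sph}) is what makes the two displayed formulas for $L$ agree.

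Concretely, for $R$: we have $\Hom_{X_3}(\sphobj\otimes\tau^*b, x) \cong \Hom_{X_3}(\tau^*b, \RsHom(\sphobj,x)) \cong \Hom_B(b, \tau_*\RsHom(\sphobj,x))$, using the tensor-hom adjunction and then $\tau^*\dashv\tau_*$; this gives $R = \tau_*\sHom(\sphobj,-)$ directly. For $L$: we want the left adjoint of $S$, so compute $\Hom_B(\tau_!\sHom(\sphobj,x)^{?}, b)$ — better, compute $\Hom_{X_3}(x, \sphobj\otimes\tau^*b)$ and move things the other way. Using Grothendieck duality $\Hom_{X_3}(x,\tau^!c)\cong\Hom_B(\tau_! x, c)$ together with $\tau^!c \cong \tau^*c\otimes\omega_\tau[\dim]$, and the identity $\sphobj\otimes\tau^*b \cong \tau^*(b\otimes\omega_\tau^{-1}[-\dim])\otimes(\sphobj\otimes\omega_\tau[\dim])$ — then $\Hom_{X_3}(x,\sphobj\otimes\tau^*b)\cong\Hom_{X_3}\big(\RsHom(\sphobj\otimes\omega_\tau[\dim],\cO_{X_3})\otimes x,\ \tau^!(b\otimes\omega_\tau^{-1}[-\dim])\big)$, which rearranges to an expression of the form $\Hom_B(\tau_!(\sHom(\sphobj,-)\ \text{applied to}\ x),b)$ once one checks that $\RsHom(\sphobj\otimes\omega_\tau[\dim],\cO_{X_3})$ tensored against $x$ followed by $\tau_*$ coincides with $\tau_!\sHom(\sphobj,x)$ up to the stated twists. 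The alternative formula $L \cong \tau_*\sHom(-,\sphobj)^\vee$ then comes from Serre duality on the fibres: $\sHom(\sphobj,x)$ and $\sHom(x,\sphobj)^\vee$ differ by a relative dualizing twist which is exactly absorbed because $\sphobj$ restricted to $\Exc$ pairs with itself to the relative canonical of $\Exc$ inside $X_3$ — this is the content of the normal bundle computation $\wedge^2\cN\cong\cO(-2,-2)$ from Proposition~\ref{prop.sph}, now in the family over $B$.

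The main obstacle I expect is bookkeeping the twists and shifts: one must pin down $\omega_\tau$ (it is the relative canonical of a $\P^1\times\P^1$-bundle, so $\cO(-2,-2)$ relatively, fibrewise) and verify that the combination of (i) the relative Serre duality twist for $\tau$, (ii) the twist in passing between $\sHom(\sphobj,-)$ and $\sHom(-,\sphobj)^\vee$ on $\Exc$, and (iii) the self-duality twist $\wedge^2\cN$ of $\Exc\subset X_3$, all cancel to leave the clean formulas stated, with no residual shift. A clean way to avoid much of this is to do everything fibrewise over $B$: prove the adjunction on each fibre $X_3^{(4)}$ (where Proposition~\ref{prop flop action general}-style explicit descriptions and the $n=4$ spherical calculation of Proposition~\ref{prop.sph} apply verbatim), note all functors involved are $B$-linear and commute with base change, and conclude the relative statement by a standard argument that a fibrewise adjunction between $B$-linear functors of this shape is an adjunction. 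I would present the proof in this relative-over-$B$ style, citing the projection formula and relative duality, and leave the fibrewise twist cancellation as the one computation to spell out.
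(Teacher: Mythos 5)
Your computation of $R$ is fine and matches the paper: tensor--hom adjunction plus $\tau^*\dashv\tau_*$ gives $R=\tau_*\sHom(\sphobj,-)$. The problems are all in the $L$ part, and they are genuine rather than bookkeeping. First, you read $\tau_!$ as the proper pushforward and set $\tau_!=\tau_*$; but in the statement $\tau_!$ denotes the \emph{left adjoint of} $\tau^*$, i.e.\ $\tau_!\cong\tau_*(-\otimes\omega_\tau[\dim\tau])\cong\tau_*(-)\otimes\omega_B[4]$, exactly as the paper uses it later in the sphericality proof. With your reading the displayed formula would say $L\cong R$, which is false here: the cotwist computation $RS\cong\id\oplus(-\otimes\omega_B)[-4]$ is asymmetric precisely because $L$ and $R$ differ by $\omega_B[4]$. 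Second, your claimed left adjoint of $\sphobj\otimes(-)$, namely $\RsHom(\sphobj,-\otimes\omega_\tau^{-1})[-\dim]$, is wrong: since $X_3$ is smooth, $\sphobj$ is perfect, so $\sphobj^\vee\otimes(-)\cong\RsHom(\sphobj,-)$ is a \emph{two-sided} adjoint of $\sphobj\otimes(-)$, with no dualizing twist; the $\omega_\tau$-twist lives entirely in $\tau_!$. These two misplacements do not cancel as written (they are off by $\omega_\tau^{\otimes 2}[2\dim\tau]$), and your Grothendieck-duality chain for $L$ defers exactly the identification that would detect this.

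Third, the isomorphism $\tau_!\sHom(\sphobj,-)\cong\tau_*\sHom(-,\sphobj)^\vee$ is purely formal: it is the identity $\tau_!\cong\tau_*\bigl((-)^\vee\bigr)^\vee$ (relative Grothendieck duality for $\tau$) together with $(\sphobj^\vee\otimes x)^\vee\cong\sphobj\otimes x^\vee$. It does not come from Serre duality on the fibres or from $\wedge^2\cN\cong\cO_{\P^1\times\P^1}(-2,-2)$; that geometric input belongs to the later computation of the cotwist, not to this adjunction statement. Finally, the fallback of ``prove the adjunction fibrewise and conclude by $B$-linearity'' is not a citable standard step---adjunctions are not checked fibrewise without producing functorial unit/counit data and base-change compatibilities, which is more work than the direct argument. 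The short correct proof is the one sketched above: write $S=(\sphobj\otimes-)\circ\tau^*$, use two-sidedness of $\sphobj^\vee\otimes(-)$ and the triple $\tau_!\dashv\tau^*\dashv\tau_*$, then apply $\tau_!\cong\tau_*\bigl((-)^\vee\bigr)^\vee$ for the second expression for $L$.
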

\begin{proof} First note that $\sphobj^\vee \otimes -$, where we take derived dual, is a two-sided adjoint to  $\sphobj \otimes - $. We then use that $\tau_! \dashv \tau^* \dashv \tau_*$. The isomorphism follows after noting that $\tau_! \cong \tau_*(-^\vee)^\vee$.
\end{proof}

\begin{prop}
$S$ is a spherical functor.
\end{prop}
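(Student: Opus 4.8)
The plan is to verify the standard criterion for sphericality: that the twist functor $T_S = \Cone(SR \to \id)$ and the cotwist $C_S = \Cone(\id \to RS)[?]$ are both equivalences, or equivalently — since we are working over a base and it is cleaner — to exhibit the cotwist $C_S$ as a shift and check the two-out-of-four conditions of Anno--Logvinenko, reducing everything to a fibrewise computation over $B$. Concretely, I would first compute $RS \colon \D(B) \to \D(B)$ directly from Proposition~\ref{prop.adj}: we have $RS(-) = \tau_* \sHom(\sphobj, \sphobj \otimes \tau^*(-))$. Using that $\sphobj = \cO_\Exc(0,-1)$ is a line bundle on $\Exc = \P\pi_1 \times_B \P\pi_2$ pushed to $X_3$, the sheaf $\RsHom(\sphobj,\sphobj)$ is computed from the Koszul resolution of $\Exc$ inside $X_3$, exactly as in Proposition~\ref{prop.sph}, now in the relative setting over $B$. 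The normal bundle of $\Exc$ in $X_3$ is $\cO_{\P\pi_1 \times_B \P\pi_2}(-1,-1)^{\oplus 2}$ relative to $B$ (this is the family version of the computation in the proof of Proposition~\ref{prop.sph}), and the relative dualizing considerations give that $\tau_* \RsHom(\sphobj,\sphobj) \cong \cO_B \oplus \cO_B[-4]$ on each fibre, so $RS \cong \id_{\D(B)} \oplus \id_{\D(B)}[-4]$. From this the cotwist $C_S = \Cone(\id \to RS)$ is $\id_{\D(B)}[-4]$, which is an equivalence; combined with the fact that $S$ has both a left and a right adjoint (Proposition~\ref{prop.adj}), the Anno--Logvinenko criterion (that $C_S$ is an equivalence and $R \cong C_S^{-1} L$, the latter a natural compatibility) then yields that $S$ is spherical.

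The remaining point is the natural isomorphism $R \cong C_S^{-1}[?] \circ L$, i.e. that $L$ and $R$ differ by the inverse cotwist shift. From Proposition~\ref{prop.adj} we have $L \cong \tau_* \sHom(-,\sphobj)^\vee$ and $R = \tau_* \sHom(\sphobj,-)$; the discrepancy between these two is governed by relative Serre duality for $\tau$ together with the self-duality properties of $\sphobj$ along $\Exc$. Since $X_3$ is Calabi--Yau of dimension $n$ (Proposition~\ref{prop.cy}) and $B$ has dimension $n-4$, the relative dualizing complex $\omega_\tau$ has degree $4$, which is precisely the shift appearing in $C_S$; chasing this through gives $R \cong C_S^{-1} L$ on the nose. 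Alternatively — and this may be the cleanest route to write up — one invokes the general principle that an object $\sphobj$ on a variety $X_3$ which is fibrewise spherical over $B$, in the sense that $S_b = \cE_b \otimes (-)\colon \D(\pt) \to \D((X_3)_b)$ is a spherical functor for each $b \in B$ with cotwist a uniform shift, induces a spherical functor $S\colon \D(B) \to \D(X_3)$; this is exactly the family-twist formalism already used implicitly for the flop functors $\fun_{ji}$ in Sections~\ref{sect.fam_descrip} and~\ref{sect.higher}, and Proposition~\ref{prop.sph} provides the fibrewise input.

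The main obstacle is bookkeeping rather than conceptual: one must carry the relative duality and the Koszul/normal-bundle computation through the non-trivial family (the bundle $\pi_3$ carries the twist $\cO(-1,\dots,-1)$), and confirm that no extra twisting sheaf from $B$ contaminates $RS$ — i.e. that the outcome is genuinely $\id \oplus \id[-4]$ and not $\id \oplus (\otimes M)[-4]$ for some nontrivial line bundle $M$ on $B$. The vanishing in Proposition~\ref{prop.sph}, read relatively, shows the relevant $\tau_*$ of the off-diagonal Koszul terms vanishes, so the only surviving contributions are $\Sym^0$ and $\wedge^2\cN$-type terms, both of which are trivial over $B$ after using $\wedge^2\cN \cong \cO_{\P^1 \times \P^1}(-2,-2)$ fibrewise; hence $M \cong \cO_B$. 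Once this is pinned down, the sphericality criterion applies verbatim.
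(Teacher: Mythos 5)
Your overall strategy matches the paper's: both use the Anno--Logvinenko criterion, compute $RS$ by a relative Koszul/normal-bundle calculation, and tie the compatibility of $L$ and $R$ to the Calabi--Yau property of $X_3$ via $\tau_!$. But there is a genuine error at exactly the point you flagged as the main obstacle: the twisting line bundle $M$ on $B$ is \emph{not} trivial. The relative normal bundle of $\Exc = \P\pi_1\times_B\P\pi_2$ in $X_3$ is not $\cO_\Exc(-1,-1)^{\oplus 2}$ with trivial dependence on $B$; since $X_3 \cong X_3^{(4)}(\pi_1,\pi_2,\pi_3)$ with $\pi_3 = V_3\otimes\cO(-1,\dots,-1)$, it is $\cO_\Exc(-1,-1)\otimes\sigma^*\pi_3$, whence $\wedge^2\cN_{\mathrm{rel}} \cong \cO_\Exc(-2,-2)\otimes\sigma^*\omega_B$ with $\omega_B\cong\cO(-2,\dots,-2)$ nontrivial on $B=\P V_4\times\dots\times\P V_{n-1}$. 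Fibrewise triviality of the surviving Koszul term does not give global triviality, and here it fails: the correct answer is $RS\cong\id\oplus(-\otimes\omega_B)[-4]$, so the cotwist is $(-\otimes\omega_B)[-5]$, a shift composed with tensoring by $\omega_B$, not a pure shift.

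This propagates into your second step: with a pure-shift cotwist the compatibility $R\cong CL[1]$ (your $R\cong C_S^{-1}L$) cannot hold ``on the nose'', because by Proposition~\ref{prop.adj} together with $\tau_!\cong\tau_*(-)\otimes\omega_B[4]$ the two adjoints differ by $\omega_B[4]$, not by a bare shift --- your own relative Serre duality remark already produces this $\omega_B$, and it must reappear in the cotwist to cancel it. The conclusion of the proposition survives: $(-\otimes\omega_B)[-5]$ is still an autoequivalence, and the Calabi--Yau condition does hold once the $\omega_B$ is reinstated, which is precisely how the paper argues. But as written, your computation of $RS$, your identification of the cotwist, and your verification of the compatibility are all off by the nontrivial line bundle $\omega_B$ whenever $n>4$ (for $n=4$ the base is a point and your statements are correct, but then the proposition reduces to Proposition~\ref{prop.sph}). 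For the same reason, the ``general principle'' invoked in your alternative route --- that a fibrewise spherical object with uniform shift cotwist yields a family spherical functor with pure-shift cotwist --- is false as stated: in a nontrivial family the cotwist can, and here does, acquire a twist by a line bundle on the base.
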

\begin{proof} 
Using the framework of Anno--Logvinenko~\cite{AL}, $S$ yields a cotwist endofunctor $C$ of $\D(B)$, satisfying the following.
\[ C \cong \Cone(\id \to RS)[-1]\]
It suffices that this is an equivalence, along with a certain Calabi--Yau condition, namely that a canonical natural transformation $R\to CL[1]$ is an isomorphism.

Now we have that 
\begin{align*}
RS & \cong \tau_* \sHom ( \sphobj, \sphobj \otimes \tau^*(-) ) \\
& \cong \tau_* \big( \sHom ( \sphobj, \sphobj) \otimes \tau^*(-) \big)  \\
& \cong \tau_* \sHom ( \sphobj, \sphobj) \otimes -    
\end{align*}
where all functors are derived. Using a family version of the argument of Proposition~\ref{prop.sph}, we may then calculate $\tau_* \sHom ( \sphobj, \sphobj)$ and obtain an isomorphism
\[RS \cong \id \oplus\, (-\otimes\omega_B) [-4]\]
where $\omega_B \cong \cO(-2,\dots,-2)$. The $\omega_B$ in this calculation arises from the determinant of the relative normal bundle of $\Exc$ over $B$, namely 
\[\wedge^2 \cN_{\text{rel}}  \cong \cO_{\Exc}(-2,-2) \otimes \sigma^*\omega_B \]
where we take projection $\sigma\colon \Exc \to B$. It follows that the cotwist
\[ C \cong  (-\otimes\omega_B)[-5]\]
which is an autoequivalence.

I briefly explain how the Calabi--Yau condition mentioned above is obtained from the Calabi--Yau property of the target space $X_3$. Note that for the fibration $\tau\colon X_3 \to B$ we have $\omega_\tau \cong \tau^* \omega_B$ because $X_3$ is Calabi--Yau by Proposition~\ref{prop.cy}. Then recall
\[ \tau_! = \tau_* ( - \otimes \omega_\tau [\dim \tau]) \cong  \tau_* (-) \otimes \omega_B  [4] \] where we use the projection formula. The condition then follows using Proposition~\ref{prop.adj}.
\end{proof}

\begin{defn}\label{defn.twistfam} Take the spherical twist autoequivalence on $X_3$
\begin{equation*} \Tw_2 = \Tw (S) \end{equation*}
where the subscript 2 is used because under the isomorphism $\Exc \cong \P V_1  \times \P V_2 \times B$ the bundle $\cO_\Exc(0,-1)$ on $\Exc$ appearing in the definition of $S$ is the pullback of~$\cO_{\P V_2}(-1)$.
The autoequivalence $\Tw (S)$ is defined so that there is a triangle of Fourier--Mukai functors
\begin{equation*} \Tw (S) \cong \Cone(SR \to \id)\makebox[0pt]{\,,} \end{equation*}
as explained in~\cite{AL}.
\end{defn}

Similarly, we have an autoequivalence $\Tw_3$ on $X_2$, by repeating the construction of this subsection using instead  $\sphobj=\cO_\Exc(-1,0)$ on $X_2$ and projection $\tau \colon X_2 \to B$.

\begin{rem} Definition~\ref{defn.twistfam} reduces to Definition~\ref{defn.twist} when $n=4$, using Proposition~\ref{prop.adj}.
\end{rem}

\begin{rem} The above twists can be formulated as EZ-twists \cite{Hor}, see for instance \cite[Definition~8.43]{Huy}. 
Indeed, taking projection $\sigma\colon \Exc \to B$ and inclusion $i\colon \Exc \to X_3$ we get that
 \begin{align*} S & \cong i_* (\sphobj \otimes \sigma^* (-))
\end{align*}
using $\tau \comp i = \sigma$ and the projection formula.
\end{rem}

\subsection{Proof} The following uses a family version of the argument of Theorem~\ref{thm.keynatiso} to generalize the result there to dimension $n> 4$.

\begin{thm}[Theorem~\ref{mainthm.equiv}]\label{thm.equiv}  For $n\geq 4$ there are natural isomorphisms
\begin{align*}
\fun_{21} & \cong \Tw_3 \comp \fun_{23} \comp \fun_{31}  \\
\fun_{31} & \cong \Tw_2 \comp \fun_{32} \comp \fun_{21}
\end{align*}
of functors from $\D(X_1)$ to $\D(X_2)$ and $\D(X_3)$ respectively, where the twists~$\Tw$ are defined in Definition~\ref{defn.twistfam} and following it.
\end{thm}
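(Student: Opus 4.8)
The plan is to run the argument of Theorem~\ref{thm.keynatiso} relative to the base $B = \P V_4 \times \dots \times \P V_{n-1}$, using the family presentation of Proposition~\ref{prop.famhigher}. First I would check that the isomorphisms $X_i^{(n)}(V_1,\dots,V_{n-1}) \cong X_i^{(4)}(\pi_1,\pi_2,\pi_3)$ for $i=1,2,3$ intertwine the birational roof diagrams of the blowup maps $f_i\colon Q \to X_i$, exactly as the isomorphisms \eqref{eqn.family1}--\eqref{eqn.family3} did one dimension down; this identifies each flop functor $\fun_{ji}$ with the relative flop functor over $B$, and in particular shows it is $B$-linear, i.e.\ commutes with $\otimes\,\tau^*(-)$ for $\tau$ the projection to $B$. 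A relative version of Proposition~\ref{prop flop action general}, obtained by running its proof fibrewise over $B$ and invoking flat base change and cohomology-and-base-change (the vanishing ``on $\P^1\times\P^1$'' now read relatively over $B$), then computes the action of each $\fun_{ji}$ on the line bundles $\cO(a,b)$ of Section~\ref{sect.famtwist}; the formulas read verbatim as in Proposition~\ref{prop flop action}.

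Next I would fix a tilting bundle on $X_1^{(n)}$, namely
\[ \cT = \big( \cO \oplus \cO(-1,0) \oplus \cO(0,-1) \oplus \cO(-1,-1) \big) \otimes \tau^*\cT_B, \]
where $\cT_B$ is the external tensor product of the Beilinson bundles $\cO \oplus \cO(-1)$ on the factors $\P V_4,\dots,\P V_{n-1}$ of $B$. This is tilting by the usual argument, since the first factor is relatively tilting over $B$ by the $n=4$ case and $\cT_B$ is tilting on $B$. Because the flop functors are $B$-linear, applying $\fun_{31}$, $\fun_{21}$ and $\fun_{32}$ to the summands of $\cT$ reduces to the first factor and reproduces the computation in the proof of Theorem~\ref{thm.keynatiso}, including the splitting of $\cT$ into the part annihilated by $\RHom(-,\sphobj)$ and the remaining summand $\cO(-1,0)$; in particular I obtain the relative analogue of the ``error object'' identity $\calcobj \cong \Tw_2^{-1}\,\fun_{31}(\cO(-1,0))$, now using the relative Koszul resolution of $\Exc = \P\pi_1\times_B\P\pi_2 \subset X_3$ and the family twist $\Tw(S)$ of Definition~\ref{defn.twistfam} together with its adjoints from Proposition~\ref{prop.adj}. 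The $\RHom$ entering the analogue of \eqref{eqn.onkeysheaf} is now computed by the relative projection and gives $\cO_B$ in place of $\C$.

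Then, exactly as before, I would study the difference functor $\Psi = \fun_{21}^{-1}\comp\fun_{32}^{-1}\comp\Tw_2^{-1}\comp\fun_{31}$ on $\D(X_1)$. The computation above gives $\Psi(\cT)\cong\cT$, so $\Psi$ induces an automorphism of $\End(\cT)$, and it remains to show this is the identity. For this I restrict to $X_1-\Exc$: Proposition~\ref{prop.intertwine} holds for all $n$, and the relative spherical object $\sphobj$ defining $S$ is supported on $\Exc$, so $\res_2\comp\Tw_2^{-1}\cong\res_2$; combining these yields $\res_1\comp\Psi\cong\res_1$, whence the induced automorphism of $\End(\cT)$ restricts to the identity on $\End(\cT|_{X_1-\Exc})$. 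Since $\Exc$ has codimension $2$ in the smooth, hence normal, variety $X_1$, the restriction map $\End(\cT)\to\End(\cT|_{X_1-\Exc})$ is injective, so the automorphism is the identity, and the tilting equivalence $\D(\End\cT)\cong\D(X_1)$ forces $\Psi\cong\id$. This is the second relation; the first follows by the same argument with the roles of the indices $2$ and $3$ exchanged.

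The main obstacle I expect is the bookkeeping in the family setup: verifying that Proposition~\ref{prop.famhigher} really intertwines the roof diagrams --- so that the $n>4$ flop functors are genuinely the relative ones and hence $B$-linear --- and that the relative versions of the cohomology-and-base-change steps inside the proof of Proposition~\ref{prop flop action general} go through, including the identification of the relative twist with the Koszul cone. Once these compatibilities are established, the categorical part (difference functor, restriction to the open locus, codimension-$2$ normality, tilting equivalence) is formally identical to the $n=4$ argument.
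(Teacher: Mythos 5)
Your proposal follows the paper's strategy for this theorem: identify the $X_i^{(n)}$ ($i=1,2,3$) with the family $4$-folds $X_i^{(4)}(\pi_1,\pi_2,\pi_3)$ over $B$ via Proposition~\ref{prop.famhigher}, note that all functors involved (flops and the family twist $\Tw(S)$) are relative to $B$, rerun the $n=4$ computation on the summands of the tilting object to get $\Psi(\cT)\cong\cT$ with $L(\cO(0,-1))\cong\cO_B$ replacing \eqref{eqn.onkeysheaf}, and then kill the difference functor $\Psi$ by restriction to $X_1-\Exc$, codimension-$2$ normality, and a tilting-type equivalence. The one genuine divergence is the last step: the paper keeps the bundle $\cO\oplus\cO(-1,0)\oplus\cO(0,-1)\oplus\cO(-1,-1)$ as a \emph{relative} tilting bundle and works with the sheaf of algebras $\cR=\tau_*\sEnd(\cT)$ on $B$, using $\D(\cR)\cong\D(X_1)$, whereas you tensor with $\tau^*$ of the Beilinson bundle on $B$ to produce an \emph{absolute} tilting bundle and invoke the ordinary equivalence $\D(\End\cT)\cong\D(X_1)$. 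Your variant works, but the assertion that it is tilting ``by the usual argument'' is not purely formal: relative tilting over $B$ tensored with a pullback of a tilting bundle on $B$ need not have vanishing higher Ext in general, since one must check $H^{>0}\big(B,\tau_*\sHom(\cT_i,\cT_j)\otimes\cL\big)=0$ for the relevant line bundles $\cL$ on $B$. Here it does hold, by the same explicit computation as in the $n=4$ case: the summands are pullbacks of line bundles on $\P V_2\times\dots\times\P V_{n-1}$ whose pairwise differences, twisted by $\Sym^\bullet(V_1^\vee\otimes\cO(1,\dots,1))$, either have all factor-degrees $\geq -1$ or have degree exactly $-1$ on the $\P V_3$ factor (which kills all cohomology), so no higher Ext appears; generation is likewise a routine Beilinson-type check. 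The trade-off is that the paper's relative formulation avoids this extra verification and keeps the base $B$ entirely passive, while yours reduces the final rigidity step to the literal $n=4$ argument with a bigger endomorphism algebra; both are valid.
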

\begin{proof}
As before, we prove the second statement, with the first following by the same argument. We replace the spaces $X_i$ for $i=1,2,3$ in the proof of Theorem~\ref{thm.keynatiso} with
\[ \familyX_i^{(4)} (\pi_1,\pi_2, \pi_3) ,\]
writing projections $\tau_i\colon X_i^{(4)} \to B$, or simply $\tau$. The proof proceeds by repeating the argument of Theorem~\ref{thm.keynatiso} in a family over $B$. I explain the key modifications needed for this relative context.

The tilting bundle $\cT$ of Theorem~\ref{thm.keynatiso}, namely
\[ \cT = \cO \soplus \cO(-1,0) \soplus \cO(0,-1) \soplus \cO(-1,-1) \]
makes sense in the relative context using the notation of Section~\ref{sect.famtwist}. It is now a relative tilting bundle over $B$, as follows. Taking $\cR = \tau_* \sEnd(\cT)$, a sheaf of algebras on $B$, we have
\[ \D(\cR) \cong \D(X_1) \]
where we take right $\cR$-modules, with mutually inverse equivalences as follows.
\[\RDerived\tau_* \RsHom(\cT,-) \qquad\qquad \tau^{-1}(-) \underset{\tau^{-1}\cR}{\Lotimes} \cT\]

The description of the flop functors is similar to Propositions~\ref{prop flop action general} and~\ref{prop flop action}. We replace the $\cA_i$ as written there with 
$\cA_i(\baseobj)$ for any $\baseobj\in \D(B)$, where we put 
\[ \cA(\baseobj) = \tau^{-1}(\baseobj) \underset{\tau^{-1} \cO_B}\otimes \cA. \]
In place of the commuting diagram in Proposition~\ref{prop flop action general}(\ref{prop flop action general b}) it suffices to prove the result with the following diagram.
\begin{center}
\begin{tikzpicture}[scale=0.8,xscale=1.8]

\node (1) at (-1,0) {$\tau_*\sHom(\cA_1,\cA_0)$};
\node (2) at (1,0)  {$\tau_*\sHom(\fun(\cA_1),\fun(\cA_0))$};
\node (1b) at (-0.05,-1) {$V_2^\vee\otimes\cO $};

\draw[->] (1) to node[above]{\scriptsize $\fun$} (2);
\draw[left hook->] (1b) to (1);
\draw[right hook->] (1b) to (2);

\end{tikzpicture}
\end{center}

Given this, the calculation of the action of the flop functors on~$\cT$ at the beginning of the proof of Theorem~\ref{thm.keynatiso} proceeds in the relative context. Note that when Proposition~\ref{prop flop action general}(\ref{prop flop action general b}) is applied to describe $\psi_3$, it takes the following form, with the twist $\cO(1,\dots,1)$ being dual to the twist in the definition of $\pi_3$.
\begin{center}
\begin{tikzpicture}[scale=0.8,xscale=1.9]

\node (1) at (-1,0) {$\tau_*\sHom(\cA_1,\cA_0)$};
\node (2) at (1,0)  {$\tau_*\sHom(\fun(\cA_1),\fun(\cA_0))$};
\node (1b) at (-0.05,-1) {$V_3^\vee\otimes\cO(1,\dots,1) $};

\draw[->] (1) to node[above]{\scriptsize $\fun$} (2);
\draw[left hook->] (1b) to (1);
\draw[right hook->] (1b) to (2);

\end{tikzpicture}
\end{center}

For equation \eqref{eqn.inv_tw_sh} in the relative context we have
 \begin{align}\label{eqn.inv_tw_sh.rel}
\calcobj & = \fun_{32}\fun_{21} (\cO(-1,0)) \notag \\ 
& \cong \Cone \big(  \cO(0,-1) \overset{\operatorname{res}}{\longrightarrow} \cO_\Exc(0,-1) \big)[-1] \notag \\
& \cong \Tw_2^{-1}  (\cO(0,-1)) \notag \\
& \cong \Tw_2^{-1} \fun_{31} (\cO(-1,0))
\end{align}
where the second isomorphism arises as follows. The formula~\eqref{eqn.invtwist} for the inverse twist is replaced by 
\begin{equation*}\label{eqn.invtwistfam} \Tw_2^{-1} \cong \Cone(\id \to SL)[-1] \end{equation*}
and \eqref{eqn.onkeysheaf} is replaced by 
\begin{align*}\label{eqn.onkeysheaffam} L(\cO(0,-1))
& \cong \tau_* \sHom ( \cO(0,-1), \sphobj)^\vee \\
& \cong \tau_* \sHom ( \cO(0,-1), \cO_\Exc(0,-1))^\vee \\
& \cong \sigma_* (\cO_\Exc)^\vee \\
& \cong \cO_B. \end{align*}
Here $\sigma\colon \Exc \to B$ denotes the projection morphism, and we obtain the last line using that $\sigma$ is a bundle with fibre $\P V_1 \times \P V_2$.  Finally, we use $S (\cO_B) \cong \sphobj = \cO_\Exc(0,-1) $ to get \eqref{eqn.inv_tw_sh.rel}.

The analogs of the intertwinements of Proposition~\ref{prop.intertwine} and \eqref{eq.sph_intertw} follow by similar arguments, using that all the Fourier--Mukai functors are relative to $B$, that is their kernels are pushed forward from the fibre product over $B$.

The argument concludes by showing the following.
\begin{equation*} \Psi = \fun_{21}^{-1} \comp \fun_{32}^{-1} \comp \Tw_2^{-1} \comp \fun_{31} \cong \id \end{equation*} 
For this, we study the endomorphism  induced by $\Psi$ of the sheaf of algebras $\cR = \tau_* \sEnd(\cT)$, similarly to the end of the proof of Theorem~\ref{thm.keynatiso}. 
\end{proof}

\begin{rem} It would be interesting to give a global analog of Theorem~\ref{mainthm.equiv} taking, for instance, a collection of quasiprojective $Y_i$ having a diagram of birational maps as for the $X_i$, and further having formal completions isomorphic to the formal completions of the $X_i$ along $\Exc_i$. A first step could be to extend existing methods for proving relations between derived equivalences on $3$-folds to this setting, for instance~\cite[Section 7.6]{DW1} which follows~\cite{Tod}.
\end{rem}

\section{Family constructions}\label{sect.fam}

I conclude with some straightforward constructions which realize each $n$-fold resolution as a family of $k$-fold resolutions for some $k<n$. I first explain the statement for $k=n-1$. This coincides with Proposition~\ref{prop.fam4} for $n=4$ and a case of Proposition~\ref{prop.famhigher} for $n=5$.

\begin{prop}\label{prop.iterate} For $n\geq 4$ we have for $i=1,\dots,n-2$
\[ X_i^{(n)}(V_1, \dots, V_{n-1}) \cong X_i^{(n-1)} (\pi_1, \dots, \pi_{n-2}) \]
where we take the following bundles.
\begin{align*} 
\lab{\pi_1} & V_1 \vecto \P V_{n-1} \\
\labpos{\vdots} & \\
\lab{\pi_{n-3}} & V_{n-3} \vecto \P V_{n-1} \\
\lab{\pi_{n-2}} & V_{n-2}\otimes\cO(-1) \vecto \P V_{n-1}
\end{align*}
\end{prop}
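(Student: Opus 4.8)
The plan is to unwind both sides to rank-$2$ vector bundles over $\prod_{j\neq i}\P V_j$ and match them directly, exactly as in the proof of Proposition~\ref{prop.fam4} but with $\P V_{n-1}$ now playing the role that $\P V_3$ played there. Write $B=\P V_{n-1}$. By the family construction recalled in Section~\ref{sect notat}, $X_i^{(n-1)}(\pi_1,\dots,\pi_{n-2})$ is the total space of
\[ \pi_i \otimes \bigotimes_{\substack{1\le j\le n-2\\ j\neq i}}\cO_{\P\pi_j}(-1) \longrightarrow \prod_{\substack{1\le j\le n-2\\ j\neq i}}\P\pi_j, \]
where the product on the right is a fibre product over $B$.

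The first step is to identify this base. Since $\pi_j=V_j\otimes\cO_B$ for $j\le n-3$ and $\pi_{n-2}=V_{n-2}\otimes\cO_B(-1)$, the identity $\P(E\otimes\cL)\cong\P(E)$ for a line bundle $\cL$ gives $\P\pi_j\cong\P V_j\times B$ as $B$-schemes for every $j$; taking the fibre product over $B$ then collapses the right-hand side to $\prod_{j\neq i,\, 1\le j\le n-2}\P V_j\times\P V_{n-1}$, which, using $i\neq n-1$, is exactly $\prod_{j\neq i}\P V_j$, the base of $X_i^{(n)}$. The second step is to track the line bundle under these identifications: each $\cO_{\P\pi_j}(-1)$ for $j\le n-3$ becomes degree $-1$ in the $j$-th slot only, while $\cO_{\P\pi_{n-2}}(-1)$ — as in Proposition~\ref{prop.fam4} — acquires an \emph{extra} $-1$ on the $B=\P V_{n-1}$ factor, coming from the defining twist $\cO(-1)$ of $\pi_{n-2}$. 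Multiplying these out against the fibre-direction bundle $\pi_i$ and checking that the twist lands on $\P V_{n-1}$ exactly once — when $i\le n-3$ via the base factor $\P\pi_{n-2}$, and when $i=n-2$ via the fibre bundle $\pi_i=\pi_{n-2}$ itself — one finds in every case the rank-$2$ bundle $V_i\otimes\cO(-1,\dots,-1)$ over $\prod_{j\neq i}\P V_j$, that is, the defining bundle of $X_i^{(n)}(V_1,\dots,V_{n-1})$. This is the whole argument; it specializes to Proposition~\ref{prop.fam4} when $n=4$ and to a case of Proposition~\ref{prop.famhigher} when $n=5$, and is carried out by the same methods of Section~\ref{sect.fam_descrip} cited there.

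I do not expect a genuine obstacle — the content is bookkeeping — but the one point that requires care, and the reason the twist sits on $\pi_{n-2}$ rather than being spread symmetrically, is that the single factor $\cO(-1)$ in the definition of $\pi_{n-2}$ must account for precisely one copy of $\cO(-1)$ on the $\P V_{n-1}$ factor of the base: for $i\neq n-2$ it appears through the base factor $\P\pi_{n-2}$, whereas for $i=n-2$ it appears through the fibre bundle. Keeping this split straight, together with the repeated use of $\P(E\otimes\cL)\cong\P(E)$ to collapse the fibre product to an honest product of projective spaces, is all that is needed.
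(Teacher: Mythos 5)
Your argument is correct and is essentially the paper's own proof written out in full: the paper simply regroups the base as $(\P V_2\times\dots\times\P V_{n-3})\times\P V_{n-2}\times\P V_{n-1}$ and invokes the methods of Section~\ref{sect.fam_descrip}, which are exactly your identifications $\P\pi_j\cong\P V_j\times B$ and the tracking of the twist in $\pi_{n-2}$. Your explicit treatment of the case $i=n-2$, where the extra $\cO(-1)$ on $\P V_{n-1}$ enters through the fibre bundle rather than a base factor, is the content the paper compresses into ``similar arguments suffice for the other $i$,'' and you handle it correctly.
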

\begin{proof} We may write
\[ \lab{X_1^{(n)}} V_1 \otimes \cO(-1,\dots,-1,-1,-1)  \vecto (\P V_2 \times \dots \times \P V_{n-3}) \times \P V_{n-2} \times \P V_{n-1} \]
so that the claim again follows by the methods of Section~\ref{sect.fam_descrip}. Similar arguments suffice for the other $i$.
\end{proof}

\begin{rem} Note for completeness that the $n=3$ case of the above Proposition~\ref{prop.iterate} is true too, when suitably interpreted. For this, we take $X_1^{(2)}(V_1)$ as simply~$V_1$, because $Z^{(2)}(V_1) = V_1$ so no resolution is needed here. By extension we take $X_1^{(2)}(\pi_1)$ as simply $\pi_1$. We may then put 
\[ \lab{\pi_1} V_1 \otimes \cO(-1)  \vecto \P V_2 \]
so that the statement follows by definition of $X_1^{(3)}(V_1,V_2)$.
\end{rem}

By iterating the above Proposition~\ref{prop.iterate} in families, we may realize $X_1^{(n)}$ as a family of $X_1^{(k)}$. The following is a direct construction to show this fact, which coincides with the above when $k=n-1$.

\begin{prop}\label{prop.gen_fam} For $n>k\geq 3$ we have for $i=1,\dots,k-1$
\[ X_i^{(n)}(V_1, \dots, V_{n-1}) \cong X_i^{(k)} (\pi_1, \dots, \pi_{k-1} ) \]
where we take the bundle
\[ \lab{\pi_{k-1}} V_{k-1} \otimes\cO(-1,\dots,-1) \vecto \P V_k \times \dots \times \P V_{n-1} \]
and $\pi_1,\dots,\pi_{k-2}$ bundles with constant fibre $V_1,\dots,V_{k-2}$ over the same base.
\end{prop}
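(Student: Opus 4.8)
The plan is to prove Proposition~\ref{prop.gen_fam} directly, using the same technique of Section~\ref{sect.fam_descrip} that underlies Propositions~\ref{prop.fam4}, \ref{prop.famhigher} and~\ref{prop.iterate}. First I would write out the resolution $X_i^{(n)}$ explicitly, say for $i=1$,
\[ \lab{X_1^{(n)}} V_1 \otimes \cO(-1,\dots,-1)  \vecto \P V_2 \times \dots \times \P V_{n-1}, \]
and group the base factors as $(\P V_2 \times \dots \times \P V_{k-1})$ on one side and $B := \P V_k \times \dots \times \P V_{n-1}$ on the other. The key geometric observation is that $\P V_2 \times \dots \times \P V_{n-1}$ is the total space of the fibre product of the trivial projective bundles $\P\pi_j \to B$ (for $j=1,\dots,k-2$ corresponding to the constant-fibre factors $\P V_2,\dots,\P V_{k-1}$, after reindexing) together with $\P\pi_{k-1}\to B$, where $\pi_{k-1} = V_{k-1}\otimes \cO(-1,\dots,-1)$ over $B$; and that under this identification the line bundle $\cO(-1,\dots,-1)$ on $X_1^{(n)}$ decomposes as the external tensor product of the relative $\cO(-1)$'s on each $\P\pi_j$, with the twist by $\cO(-1,\dots,-1)$ on $B$ absorbed into the definition of $\pi_{k-1}$. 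Comparing with the definition of the family resolution $X_1^{(k)}(\pi_1,\dots,\pi_{k-1})$ gives the stated isomorphism.

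Concretely I would argue as follows. By definition $\familyX_1^{(k)}(\pi_1,\dots,\pi_{k-1})$ is the total space of the bundle $\pi_1 \otimes \cO_{\P\pi_2\times_B\dots\times_B\P\pi_{k-1}}(-1,\dots,-1)$ over the fibre product $\P\pi_2\times_B \dots\times_B \P\pi_{k-1}$. Since $\pi_2,\dots,\pi_{k-2}$ are trivial with fibres $V_2,\dots,V_{k-2}$ (reindexing the statement's $V_1,\dots,V_{k-2}$), we have $\P\pi_j \cong \P V_{j}\times B$ for these $j$, and the relative $\cO(-1)$ pulls back to $\cO(-1)$ on the $\P V_j$ factor. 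For the remaining bundle, $\Tot(\P\pi_{k-1}) \cong \P V_{k-1}\times B$, but now the relative hyperplane bundle $\cO_{\P\pi_{k-1}}(-1)$ corresponds to $\cO(-1)$ on the $\P V_{k-1}$ factor tensored with $\cO(-1,\dots,-1)$ on $B$, exactly because of the twist built into $\pi_{k-1}$. Assembling these identifications, the fibre product $\P\pi_2\times_B\dots\times_B\P\pi_{k-1}$ becomes $\P V_2\times\dots\times\P V_{n-1}$, and the total space of $\pi_1\otimes\cO(-1,\dots,-1)$ over it becomes the total space of $V_1\otimes\cO(-1,\dots,-1)$ over $\P V_2\times\dots\times\P V_{n-1}$, which is $X_1^{(n)}(V_1,\dots,V_{n-1})$. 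The cases $i=2,\dots,k-1$ follow by the cyclic symmetry among the $V_j$, exactly as in the earlier propositions.

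The main obstacle — really the only subtlety — is bookkeeping: keeping track of which index in the $k$-fold construction corresponds to which index in the $n$-fold construction, and making sure the single twist by $\cO(-1,\dots,-1)$ over $B$ is attributed to exactly one of the $\pi_j$ (here $\pi_{k-1}$) so that no twist is double-counted or dropped. This is precisely the point already handled in the proofs of Propositions~\ref{prop.fam4} and~\ref{prop.iterate}, where the ``the second $-1$ comes from the definition of $\pi_2$'' remark does the analogous job; here one just needs the observation iterated or applied in one step over the larger base $B = \P V_k\times\dots\times\P V_{n-1}$. I would therefore keep the proof brief, writing out the $i=1$ case in the display above and remarking that the methods of Section~\ref{sect.fam_descrip} apply verbatim, with the other $i$ following by symmetry. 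One could alternatively deduce Proposition~\ref{prop.gen_fam} from Proposition~\ref{prop.iterate} by induction on $n-k$, composing families of families, but the direct argument is cleaner and matches the style of the surrounding results.
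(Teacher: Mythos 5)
Your proof is correct and follows essentially the same route as the paper: regroup the base of $X_1^{(n)}$ into the factors $\P V_2\times\dots\times\P V_{k-1}$ and $B=\P V_k\times\dots\times\P V_{n-1}$, apply the total-space identification of Section~\ref{sect.fam_descrip} with the single twist $\cO(-1,\dots,-1)$ over $B$ absorbed into $\pi_{k-1}$, and treat the other $i$ (and $k=3$) by the analogous argument. The paper's proof is just a terser version of this, so no further comment is needed.
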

\begin{proof} Note first for consistency that the dimension of the base is $n-k$. Observe that, taking $k\geq 4$, we may write the following.
\begin{multline*} \lab{X_1^{(n)}} V_1 \otimes \cO(-1,\dots,-1,-1,-1,\dots,-1)  \\ \vecto (\P V_2 \times \dots \times \P V_{k-2}) \times \P V_{k-1} \times (\P V_k \times \dots \times \P V_{n-1}) \end{multline*}
We deduce the claim for $i=1$ for $k\geq 4$. A similar argument gives the claim for other $i$, and also $k= 3$.
\end{proof} 




\begin{thebibliography}{HLSam}


\bibitem[ADM]{ADM}
N.~Addington, W.~Donovan, and C.~Meachan, Mukai flops and $\mathbb{P}$-twists, \emph{J.\ Reine Angew.\ Math.\ (Crelle)} \textbf{748} (2019)  227--240, \href{http://arxiv.org/abs/1507.02595}{\textsf{arXiv:1507.02595}}.

\bibitem[AL1]{AL}
R.~Anno and T.~Logvinenko, Spherical DG functors, \emph{J.\ Eur.\ Math.\ Soc} \textbf{19} (9) (2017) 2577--2656.

\bibitem[AL2]{ALb}
R.~Anno and T.~Logvinenko, $\mathbb{P}^n$-functors, \href{http://arxiv.org/abs/1905.05740}{\textsf{arXiv:1905.05740}}.




\bibitem[Bar]{Bar}
F.\ Barbacovi,
\newblock {Spherical functors and the flop-flop autoequivalence},
\newblock \href{http://arxiv.org/abs/2007.14415}{\textsf{arXiv:2007.14415}}.

\bibitem[BB]{BB}
A.~Bodzenta and A.~Bondal, Flops and spherical functors, \href{http://arxiv.org/abs/1511.00665}{\textsf{arXiv:1511.00665}}.

\bibitem[DS1]{DS1}
W.~Donovan and E.~Segal, {Window shifts, flop equivalences and Grassmannian twists}, \emph{Compos.\ Math.} \textbf{150} (6) (2014), 942--978.

\bibitem[DS2]{DS2}
\bysame, Mixed braid group actions from deformations of surface singularities, \emph{Comm.\ Math.\ Phys.} \textbf{335} (1)  (2014) 497--543.

\bibitem[DW1]{DW1}
W.~Donovan and M.~Wemyss, Noncommutative deformations and flops, \emph{Duke Math.~J.} \textbf{165} (8) (2016) 1397--1474.

\bibitem[DW3]{DW3}
\bysame, Twists and braids for general $3$-fold flops, \emph{J.\ Eur.\ Math.\ Soc.} \textbf{21} (6) (2019) 1641--1701.

\bibitem[HLSam]{HLSam}
D.~Halpern-Leistner and S.~Sam,
Combinatorial constructions of derived equivalences, \emph{Adv.\ Math.} \textbf{303} (2016) 1264--1299.

\bibitem[HLShi]{HLShi}
D.~Halpern-Leistner and I.~Shipman,
Autoequivalences of derived categories via geometric invariant theory, \emph{Adv.\ Math.} \textbf{303} (2016) 1264--1299.

\bibitem[Har]{Har}
W.~Hara, On derived equivalence for Abuaf flop: mutation of non-commutative crepant resolutions and spherical twists, \href{http://arxiv.org/abs/1706.04417}{\textsf{arXiv:1706.04417}}.



\bibitem[Hor]{Hor}
R.\ P.\ Horja, Derived category automorphisms from mirror symmetry, {\em Duke Math.\ J.} \textbf{127} (2005) 1--34.

\bibitem[Huy]{Huy}
D.~Huybrechts, \emph{Fourier--Mukai transforms in algebraic geometry}, Oxford Mathematical Monographs, Oxford University Press, 2006.

\bibitem[JL]{JL}
Q.~Jiang, N.~C.~Leung, Derived category of projectivization and flops, \href{http://arxiv.org/abs/1811.12525}{\textsf{arXiv:math/1811.12525}}.

\bibitem[Kaw]{Kaw}
Y.~Kawamata, {{Derived equivalence for stratified Mukai flop on $G(2,4)$}}, \href{http://arxiv.org/abs/math/0503101}{\textsf{arXiv:math/0503101}}.

\bibitem[Kit]{Kit}
A.\ F.\ Kite, \emph{Fundamental Group Actions on Derived Categories}, thesis, King’s College London, 2018, available at \href{http://www.homepages.ucl.ac.uk/~ucaheps/papers/kite%20-%20thesis.pdf}{\texttt{http://www.homepages.ucl.ac.uk/$\sim$ucaheps}}.


\bibitem[ST]{ST}
P.~Seidel and R.~P.~Thomas, {Braid group actions on derived categories of sheaves}, {\em Duke Math.\ J.} \textbf{108} (2001) 37--108.

\bibitem[Tod]{Tod}
Y.~Toda, On a certain generalization of spherical twists,
\emph{Bulletin de la Soci\'et\'e Math\'ematique de France} \textbf{135} (1) (2007) 119--134.


\end{thebibliography}
\end{document}